\newtheorem{theorem}{Theorem}
\newtheorem{lemma}[theorem]{Lemma}
\newtheorem{corollary}[theorem]{Corollary}
\theoremstyle{definition}
\newtheorem{definition}{Definition}
\newcommand{\vt}{$3$--Cayley tree\xspace}
\newcommand{\vts}{$3$--Cayley trees\xspace}
\newcommand{\cutT}[2]{\ensuremath{T\left({#1},{#2}\right)}}
\newcommand{\cutTopp}[2]{\ensuremath{\overline{T}\left({#1},{#2}\right)}}
\newcommand{\position}{\ensuremath{\phi}}
\newcommand{\posf}[1]{\ensuremath{\position\left({#1}\right)}}
\newcommand{\posfInv}[1]{\ensuremath{\position^{-1}\!\left({#1}\right)}}
\newcommand{\set}[1]{\left\{#1\right\}}
\begin{document}

\title{Thin Tree Position
}


\author{R. Sean Bowman \and
		Douglas R. Heisterkamp \and
		Jesse Johnson
}




\maketitle

\begin{abstract}
  We introduce a method for creating a special type of tree, a
  \emph{tree position}, from a weighted graph.  Leaves of the tree
  correspond to vertices of the original graph, and the tree edges
  contain information which can be used to partition these vertices.
  By repeatedly applying reducing operations to the tree position
  we arrive at a \emph{thin} tree position, and we show that
  partitions arising from thin tree positions have especially nice
  properties.  The algorithm is based the topological notion of thin
  position for knots and $3$--manifolds and builds on the previously
  defined idea of a Topological Intrinsic Lexicographic Order (TILO).

\end{abstract}

\section{Introduction}
\label{intro}

A number of problems in machine learning, data mining and signal
processing require that one find a partition of a graph with small
boundary weight, but whose subsets have relatively large interior
weights. These graphs may directly represent the given data (as in the
case of social networks) or be derived from vector data via a
heuristic such as $k$--nearest neighbors. In either case, these graphs
often have millions of vertices, making a brute force search for
efficient partitions infeasible. 

In the present paper, we introduce an approach we call \emph{Thin
Tree Position} (TTP) for finding efficient partitions of graphs by
carrying out a more targeted search, using intuition from
three dimensional geometry/topology. This approach builds on the
TILO/PRC (Topological Intrinsic Lexicographic Order / Pinch Ratio
Clustering)  algorithm introduced in~\cite{thingraphs}
and~\cite{sdm13prc}, but improves on a number of deficiencies of this
approach. 
The TILO/PRC approach consists of two steps: First, the TILO algorithm
assigns a linear ordering to the vertices of the graph $G$ and then
progressively improves this ordering with respect to a carefully
chosen metric.  Next, the PRC algorithm picks out consecutive blocks
in the final TILO ordering that make up the partition.

Both TILO and TTP are algorithms that use ideas from low dimensional
topology, specifically the concept of \emph{thin position} of knots
and $3$--manifolds~\cite{Gabai,st:thin}.  As such, these methods rely
on the broad shape of the data set instead of its exact geometry.
Carlsson has argued that algorithms of this type should perform well
for certain problems and can give more information than existing
methods~\cite{Carlsson09}.  We expect that TTP will be useful 
for clustering as well as other applications in topological data
analysis.

The present approach addresses two major issues with TILO/PRC: First,
the linear nature of TILO/PRC forces it to treat the subsets of the
partition at the front and back of the TILO  ordering differently from
the subsets in the middle. As we describe in Section~\ref{sect:tpts},
the TTP algorithm replaces the linear ordering with a trivalent tree
structure that allows all the subsets to be treated consistently. As
we discuss in Section~\ref{sect:compare}, this structure is a natural
generalization of TILO orderings.

Second, the gradient-like TILO step of the TILO/PRC is relatively
constrained. In order to make the search space reasonable, TILO only
looks for ways to shift a single vertex at a time forward or backward
in the ordering. Because TPT uses a tree structure, the equivalent
step in TPT is able to move entire branches, which would be equivalent
to allowing TILO to shift arbitrary subsets of consecutive vertices.
However, as we describe in Section~\ref{sect:reduction}, this tree
structure has a very simple reduction criterion analogous to that of
TILO/PRC.

\section{Thin position trees}
\label{sect:tpts}

Consider an undirected graph $G = (V,E)$ in which every edge $e \in E$
has two distinct vertices and every edge is assigned a positive real
number $w(e)$ called its \emph{edge weight}. In many cases, each edge
weight will simply be 1. The weight of a subset of edges $F \subset E$
will be the sum of the weights of those edges, $w(F) = \sum_{f \in F}
w(f)$. An edge is also specified by the two endpoints $f = (v_i, v_j)$
for $f \in E, v_i, v_j \in V$.  A short hand of the weight function is
used for vertices
\begin{gather*} 
w(v_i,v_j) = 
\begin{cases} w(\ (v_i,v_j)\ )\ \text{ if } (v_i,v_j) \in E\\
0 \text{ if } (v_i,v_j) \notin E\\
\end{cases}
\end{gather*}
and for sets $A \subset V, B \subset V$, 
\begin{gather*} 
w(A, B) = \sum_{v_i \in A, v_j \in B} w(v_i,v_j).
\end{gather*}

\begin{definition} 
A \emph{\vt} \cite{CayleyTrees2012} is  a connected acyclic graph in
which every non leaf vertex has  degree of exactly three.  
We call the leaf vertices \emph{boundary vertices} and the
degree three vertices \emph{interior vertices}.
\end{definition}
A \vt is also know as a \emph{boron} tree \cite{BoronTrees2013} in which the
interior vertices correspond to boron atoms and the boundary vertices
correspond to hydrogen atoms.
The number of interior vertices of a \vt is always two less
than the number of boundary vertices; this can be proved, for example,
using an inductive argument, or from the fact that the Euler
characteristic of a tree is one.

We will use \vts\ to study general graphs as follows:
\begin{definition}
Given a finite graph $G$, possibly with weighted edges, a \emph{tree
position} for $G$ is a pair $(C, \position)$ where $C$ is a \vt 
and $\position$ is a one to one map from the vertices of $G$ onto the
boundary vertices of $C$.  
\end{definition}

Figure~\ref{fig:treeposition} presents example tree positions
$(C,\position)$ and $(C',\position')$ for the graph at the top of the
figure.  The outlined circles represent interior vertices, while the
filled in circles are boundary vertices of a tree position.  In this
figure, the relative positions of the vertices of the graph are
maintained in displaying the boundary vertices of the tree positions.

\begin{figure}[!tb]
  \begin{center}
\includegraphics{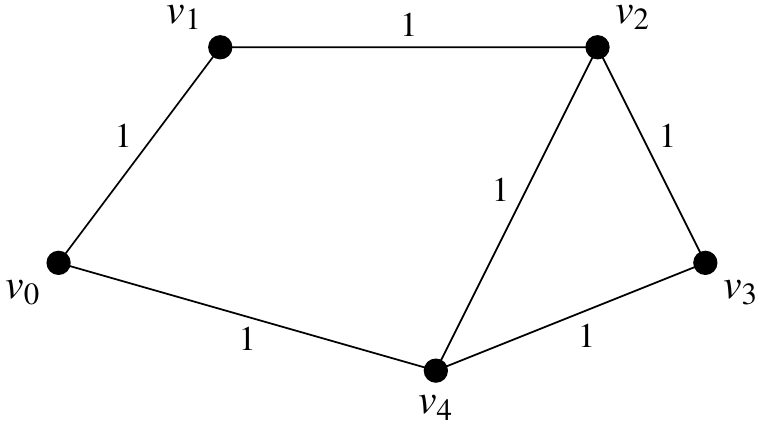}
\vspace*{4ex}

\includegraphics{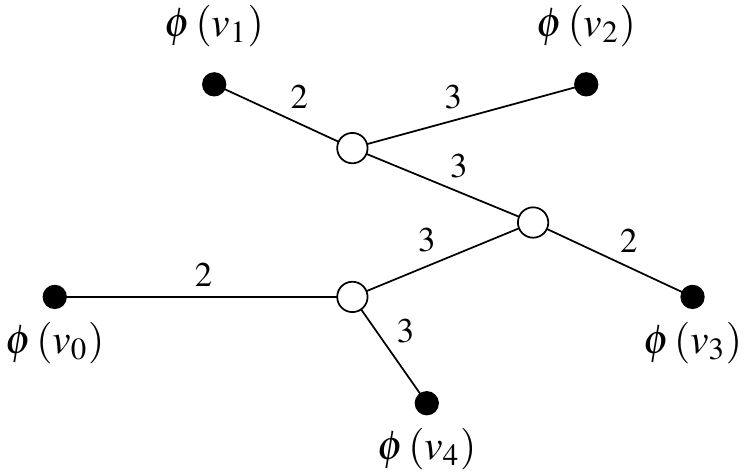}
\hfill
\includegraphics{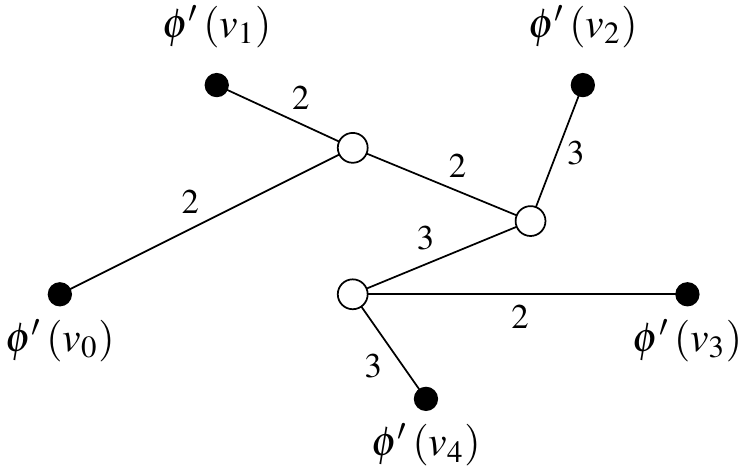}
\vspace*{2ex}
\caption{Graph $G$ with tree positions $(C,\position)$ and $(C',\position')$.}
\label{fig:treeposition}
\end{center}
\end{figure}

\begin{figure}[h]
\begin{center}

\includegraphics{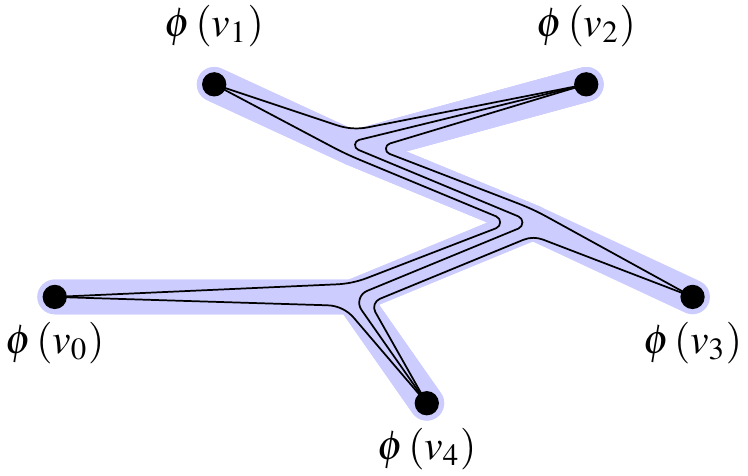}
\hfill
\includegraphics{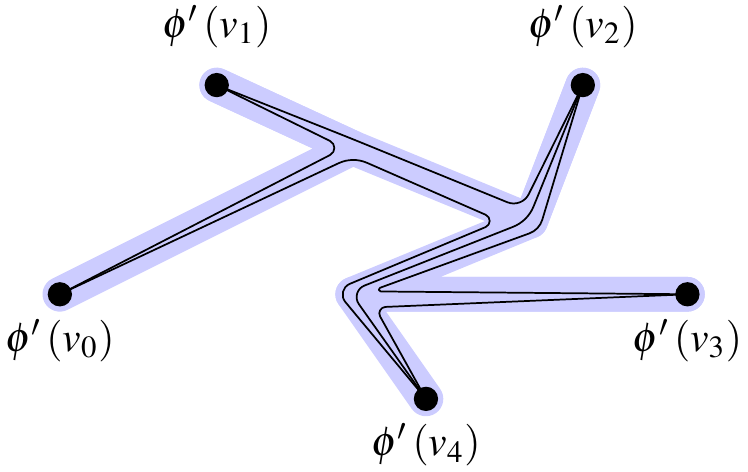}
\vspace*{2ex}
\caption{Edges of $G$ drawn through tree positions $(C,\position)$ and $(C',\position')$.}
\end{center}
\label{fig:topomap}
\end{figure}

Given a tree position $(C, \position)$ for a graph $G$, let $v,w$ be
vertices of $G$ spanned by an edge $e$. Because $C$ is a tree, there
is a unique simple path in $C$ from $\position(v)$ to $\position(w)$. We
say that the edge $e$ \emph{passes through} each of these edges of
$C$. In other words, if $f$ is an edge of $C$ then for any topological
map from the realization of $G$ to the realization of $C$, as in the
picture of Figure~\ref{fig:topomap}, the image of $e$
would be forced to intersect $f$.   For an edge $f$ of $C$, let $R(f)$
be the set of edges of $G$ that pass through $f$.

\begin{definition}
Given a subtree $T$ of a tree position $(C, \position)$ of $G=(V,E)$, the
set of vertices of $V$ that are the image of the boundary vertices in
$T$ is $\posfInv{T} = \left\{ v \in V\ |\ \posf{v} \in T\right\}$.
\end{definition}
\begin{definition}
Let $f$ and $g$ be two distinct edges of a tree position $(C, \position)$.  
Two subtrees of $C$ are defined by cutting the edge $f$.  The subtree
containing $g$ is denoted \cutT{f}{g}, and the subtree not containing $g$ is 
denoted \cutTopp{f}{g}.
\end{definition}
Note that one endpoint of $f$ is in \cutT{f}{g} and the other is in
\cutTopp{f}{g}.  Thus cutting an edge $f$ of a tree position $(C,
\position)$ divides the vertices of $G$ into two disjoint sets:
\posfInv{\cutT{f}{g}} and \posfInv{\cutTopp{f}{g}}.
\begin{definition}
Given a tree position $(C, \position)$ for a graph $G$, the
\emph{width} $b(f)$ of an edge $f$ in $C$ is the sum of the weights of
the edges of $G$ that pass through $f$:  
\begin{align*}
b(f) = w\left(R(f)\right) = w\left( \posfInv{\cutT{f}{g}} , \posfInv{\cutTopp{f}{g}}\right)
\end{align*}
or for some edge $g$ of $C$ with $g \neq f$.
\end{definition}

Note that we indicate the width of $f$ by $b(f)$ rather than $w(f)$
because it is equal to the area of the boundary of the subset of $G$
defined by the boundary vertices in either of the subtrees of $C$
defined by removing $f$. This notation is also in line with the notation
in~\cite{thingraphs}.
A
notion of width for a tree position, analogous to the width of a TILO
ordering, is created by placing the widths of all of the tree edges into a
multiset and sorting in nonincreasing order. Tree widths can then be 
compared by a lexicographic ordering of the widths.
In Figure~\ref{fig:treeposition} the edges of graph $G$ have weight one and the edges
of the tree positions are denoted with their width.  
The width of $(C,\position)$ is (3,3,3,3,2,2,2) and
the width of $(C',\position')$ is (3,3,3,2,2,2,2).  Since the width of $(C',\position')$ is less than
the width of $(C,\position)$, we say that tree position $(C',\position')$ is \emph{thinner} than tree position
$(C,\position)$.
As with
linear TILO, we would like to find tree positions whose widths are as
low as possible. To do this, a \emph{branch shift} operation that is analogous to the
TILO shift is defined. A branch shift is described by two equivalent
operations: first an edit operation of removing and
reattaching a vertex (see Figure \ref{fig:bsEdit}), 
and second a cyclic shifting of edges (see Figure \ref{fig:bsCycle}).

\begin{figure}[!t]
\begin{center}
\includegraphics{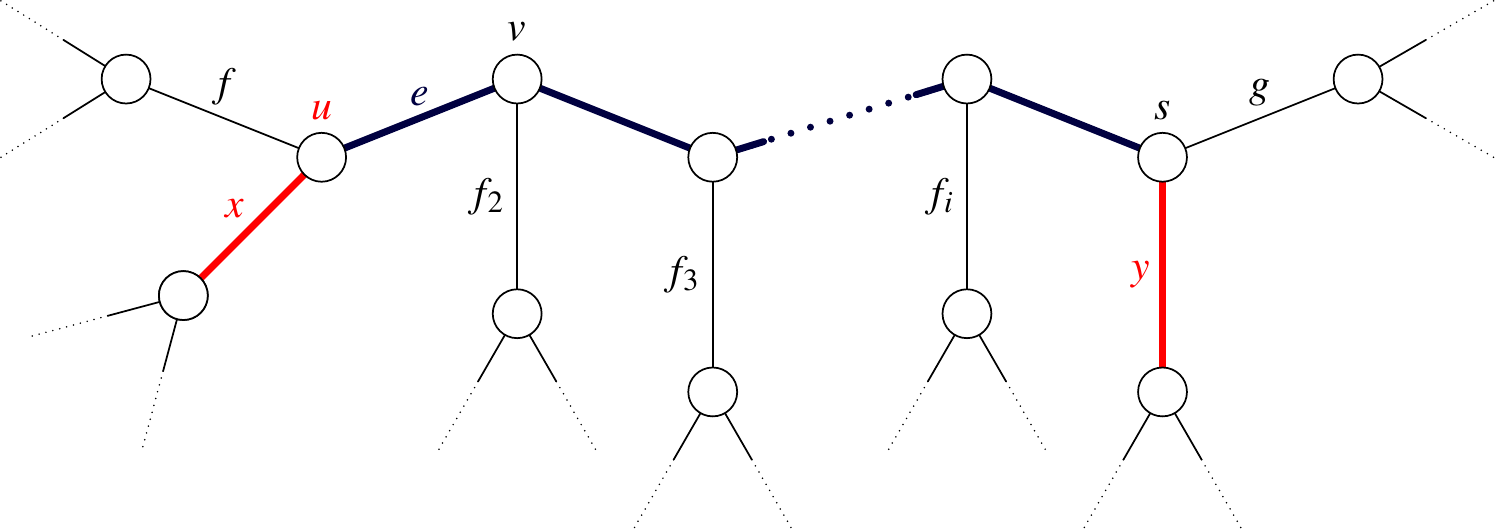}

\vspace*{2ex}

\includegraphics{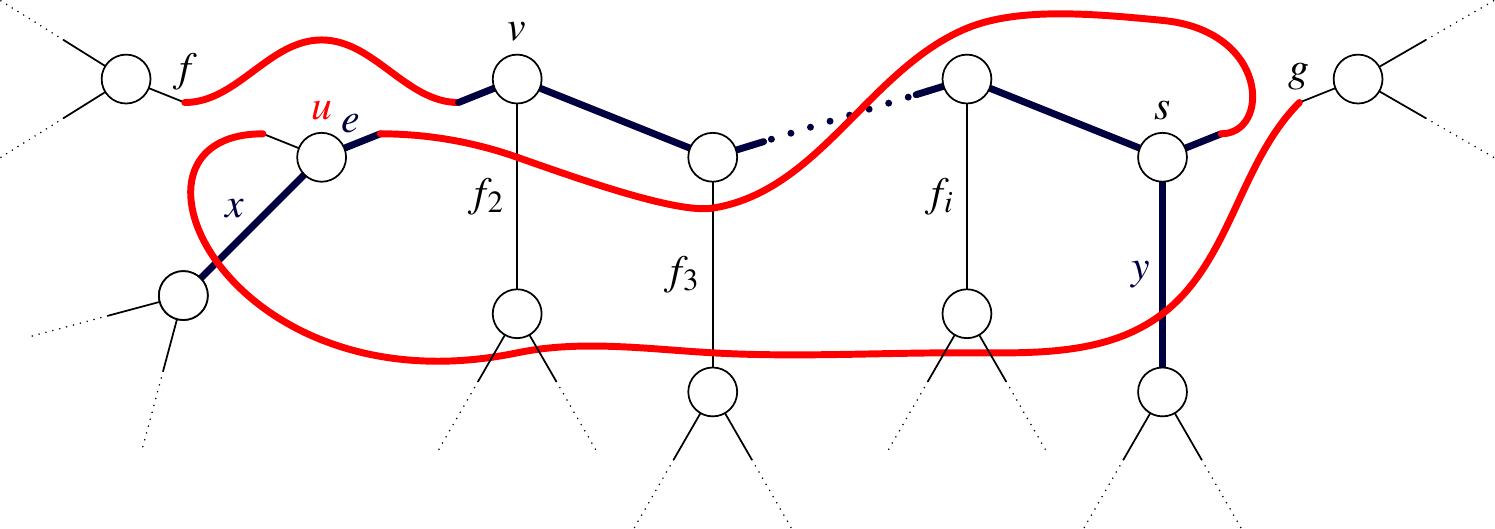}

\vspace*{2ex}

\includegraphics{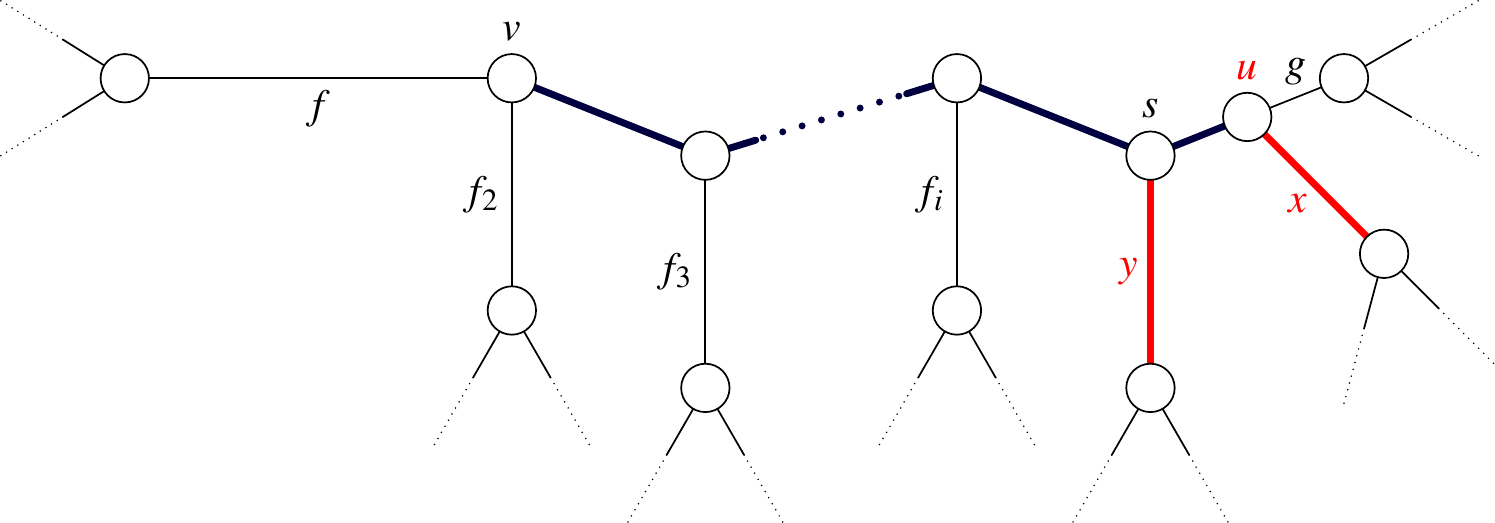}
\end{center}

\caption{Branch shift of $x$ to $y$ as an edit operation}
\label{fig:bsEdit}
\end{figure}

Given a \vt $C$, let $x$ and $y$ be edges of $C$ that do
not share a common endpoint.  Since $C$ is a tree, there is a unique
simple path from $x$ to $y$.  Let vertex $u$ be the endpoint of $x$
that is shared with the first edge of the path, $e$.    Let edge $f$ be
other edge sharing vertex $u$.  Removing vertex $u$ from
$C$ is the operation of disconnecting edge $e$ from vertex $v$ (the
other endpoint of $e$), disconnecting edge $f$ from vertex $u$, and
attaching the free end of $f$ to vertex $v$.
Let vertex $s$ be the common endpoint of $y$ and the last edge of the
path.  Let edge $g$ be adjacent to $y$ and the last edge of the path.  Adding 
vertex $u$ to $C$ is the operation of disconnecting edge $g$ from from
vertex $s$, attaching the free end of edge $e$ to vertex $s$, and attaching
the free end of edge $g$ to vertex $u$. See Figure \ref{fig:bsEdit}
for an example in which the original tree position is at the top, the middle is the tree position
after modifying the edges but not moving any vertices in the display, and the bottom is the same 
tree position but with vertex $u$ moved in the display.

An alternative view of a branch shift operation is that of a cyclic shift
of the edges adjacent to the path from $x$ to $y$.  In this view, $x$,
$y$, and all of the non-path edges sharing a vertex in the interior of the
path are disconnected.  Edge $x$ is reattached at edge $y$'s old
location.  The rest of the edges are reattached one vertex over in the
direction of $x$.  See Figure \ref{fig:bsCycle} for an example.

\begin{figure}[!t]
\begin{center}
\includegraphics{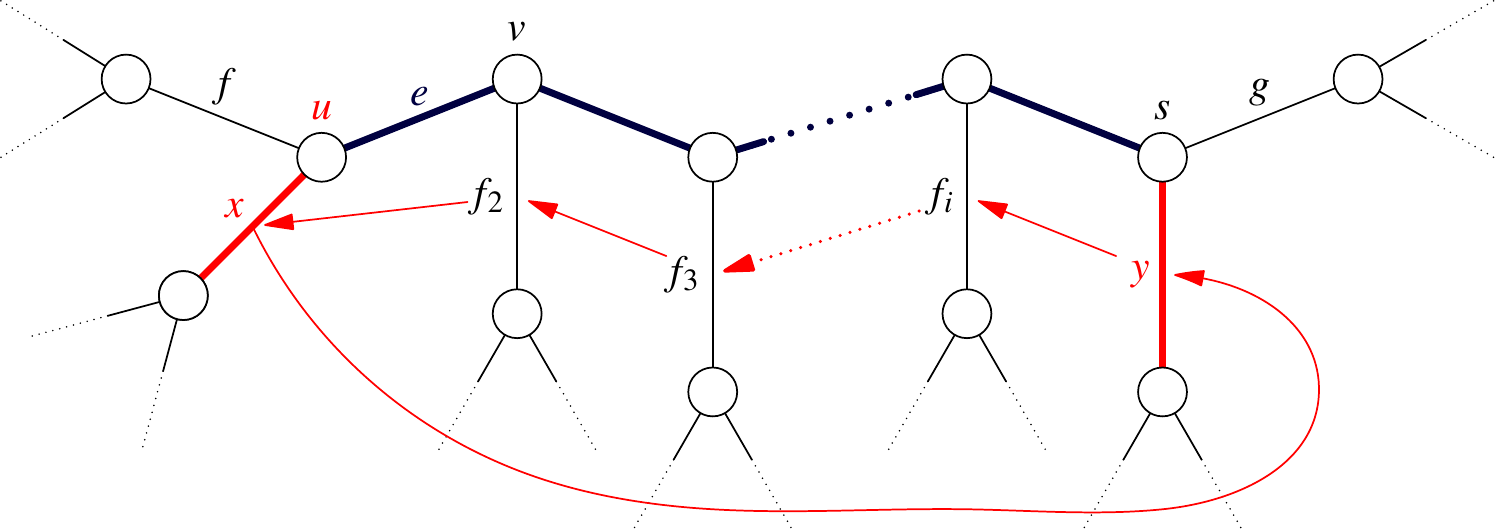}

\vspace*{5ex}

\includegraphics{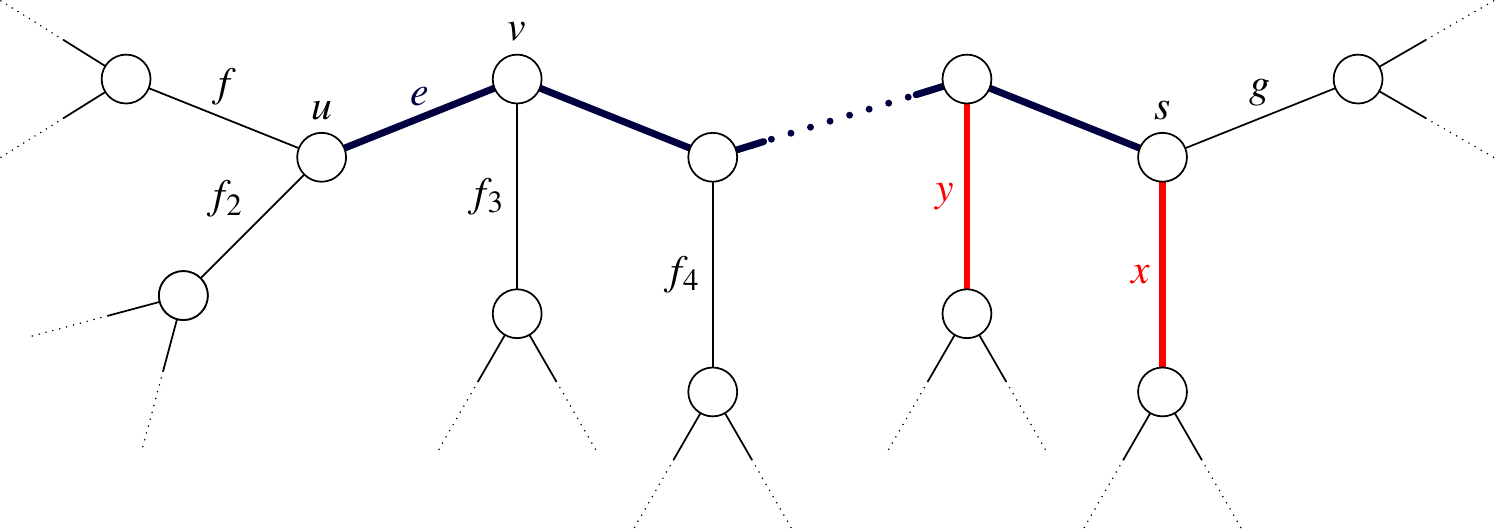}

\end{center}

\caption{Branch shift of $x$ to $y$ as a cyclic shift of branches}
\label{fig:bsCycle}
\end{figure}

The difference between viewing the branch shift as a cyclic shift of
edges and the edit operation is just a relabelling of edges and
internal nodes.  The cyclic shift of edges will be used when making a
connection to shift operations on TILO linear orderings.  A typical
software implementation will use the edit operation.  Applying the
branch shift move on $C$ creates the new tree $C'$ and is described as
\emph{shifting $x$ to $y$}.  Note that shifting $y$ to $x$ produces a
different tree than shifting $x$ to $y$.

If we perform a branch shift on a \vt $C$, the boundary vertices of $C$
are not touched. There is thus a canonical map from the boundary
vertices of $C$ to the boundary vertices of $C'$. If $(C, \position)$ is a
tree position for a graph $G$ then composing $\position$ with this
canonical map defines a one to one map $\phi'$ from the vertices of
$G$ to the boundary vertices of $C'$, which in turn defines a new tree
position for $G$.  In this paper, $C$ and $C'$ will share the same set
of boundary vertices and the canonical map is just the identity map,
\position = $\position'$.

\begin{definition}
The tree position $(C', \phi)$ is a \emph{branch shift} of $(C, \phi)$ 
when $C'$ is a branch shift of $C$.
\end{definition}

The new tree position defined by a branch shift has its own width,
which may be higher or lower than the original.

\begin{definition}
A tree position $(C, \position)$ is \emph{weakly reducible} if
there is a branch shift that produces a new tree position with
strictly lower width. Otherwise,  $(C, \position)$ is
\emph{strongly irreducible} or a \emph{thin tree position}.  
\end{definition}

\section{Comparing TILO tree position orderings to TILO linear orderings}
\label{sect:compare}

Initially, the idea of tree position may appear to be completely
different from the linear orderings used in the original TILO
algorithm. However, as we will see in this section, it is in fact a
very natural generalization.

The TILO algorithm as described in~\cite{thingraphs}
begins with a linear ordering of the vertex set $V$,
i.e.\ a one to one function from $V$ onto the set $[0,N-1] =
\{0,1,\ldots,N-1\}$, where $N$ is the number of vertices in $V$. We
will call this function a \emph{TILO ordering}. A TILO ordering of $V$
defines a sequence of subsets $A_i = \{v \in V\ |\ o(v) \leq i\}$.
This, in turn, defines a sequence of boundary widths $b_i =
w(A_i,\overline{A}_i)$ where $\overline{A}_i = \{ v \in V | v \notin
A_i\}$ is the complement of $A_i$.
Given a TILO ordering $o : V \rightarrow [0,N-1]$, we say that a
second TILO ordering $o'$ is the result of a \emph{shift} on $o$ from
$a$ to $b$ if $o'$ of the composition of $o$ with a cyclic permutation
of the block $[a,b]$ of consecutive numbers in $[0,N-1]$ that sends
$a$ to position $b$. For example, for $N = 8$, a shift of $2$ to $5$
takes the sequence $0,1,2,3,4,5,6,7$ to $0,1,3,4,5,2,6,7$. The shift
from $5$ to $2$ takes the initial sequence to $0,1,5,2,3,4,6,7$.
The width of a TILO ordering is the multiset of widths $b_i$ sorted in
nonincreasing order.  Lexicographic order is used to compare widths.

\begin{definition}
\label{def:stronglyirred}
A TILO linear ordering $o$ is called \emph{weakly reducible} if it satisfies
the reduction criteria in Lemma~\ref{lem:TILOreduction}.
A TILO linear ordering is called \emph{strongly
irreducible} if it is not weakly reducible.
\end{definition}

By definition, a weakly reducible ordering admits a shift that decreases
its width. The TILO algorithm performs the series of shifts determined by
Lemma~\ref{lem:TILOreduction} that reduce the width until it finds a strongly
irreducible ordering.  (Because there are a finite number of orderings, at
least one must be strongly irreducible.) 
To present Lemma~\ref{lem:TILOreduction}, we first need to define
slope $s_{i,k}$ with respect to order $o$ is  as the sum of the edges from vertex
$v_{o(k)}$ to other vertices in $\overline{A}_i$ minus the sum of edges from 
$v_{o(k)}$ to other vertices in $A_i$:  
\begin{gather} \label{eqn:slopeTILO}
s_{i,k} = 
w\left(v_{o(k)}, \overline{A}_i \setminus \{v_{o(k)}\}\right) -
w\left(v_{o(k)}, A_i \setminus \{v_{o(k)}\}\right) 
\end{gather}
where $\setminus$ means set difference.  The adjacency $a_{i,j}$ with
respect to order $o$ is the edge weight between the $i$-th and $j$-th
vertices of order $o$: $a_{i,j} = w( v_{o(i)}, v_{o(j)})$.  The
following lemma is a slight rewording of \cite[Lemma 3]{thingraphs}
for the case of shifting a vertex earlier in a ordering with a vertex
later in the ordering.
\begin{lemma}
\label{lem:TILOreduction}
A TILO linear ordering $o$ is weakly reducible if for some $i$ and $k$
such that $k < i$ the following conditions hold:
\begin{gather*}
s_{i,k} > 0, \\
b_{t-1} \leq b_{t}\ \text{ for all }\ k < t \leq i, \text{ and}\\
s_{i,k} - s_{i,i+1} - 2 a_{k,i+1} > 0.
\end{gather*}
In this case, shifting $k$ to $i$ reduces the width of $o$. 
\end{lemma}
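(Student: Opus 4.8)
The plan is to track precisely which boundary widths change when we perform the shift of $k$ to $i$, to rewrite each changed width in terms of the slopes of the moved vertex, and then to argue that the resulting sorted multiset of widths drops lexicographically. First I would identify the affected boundaries. Writing $x = v_{o(k)}$ for the vertex being moved, the shift cyclically permutes the block of positions $[k,i]$, so the cumulative sets $A_t$ are unchanged for $t < k$ and for $t \ge i$ (in the latter range $A_t$ is the same \emph{set} of vertices, merely reordered inside the block), while for $k \le t \le i-1$ the new set is $A_t' = A_{t+1}\setminus\{x\}$. Thus only the widths $b_k,\dots,b_{i-1}$ change. The central computation is that deleting $x$ from $A_{t+1}$ and placing it on the far side of boundary $t$ alters the boundary weight by exactly the slope of $x$, yielding the identity
\begin{equation*}
b_t' = b_{t+1} - s_{t+1,k}, \qquad k \le t \le i-1,
\end{equation*}
and in particular $b_{i-1}' = b_i - s_{i,k}$ for the top of the block.

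Next I would record how the slopes of $x$ vary with the boundary index. Advancing the boundary by one moves a single vertex from the complement into $A$, so $s_{j+1,k} = s_{j,k} - 2a_{k,j+1}$; since all edge weights are positive, $s_{j,k}$ is nonincreasing in $j$, and with the first hypothesis $s_{i,k}>0$ this gives $s_{t+1,k} \ge s_{i,k} > 0$ for every $t \le i-1$. Hence each new width satisfies $b_t' = b_{t+1}-s_{t+1,k} < b_{t+1}$. Moreover, because $s_{t+1,k}$ is nonincreasing in $t$ while, by the second hypothesis, $b_{t+1}$ is nondecreasing across the block, the new widths $b_t'$ are increasing in $t$, with maximum $b_{i-1}' = b_i - s_{i,k}$ and minimum $b_k' = b_{k+1}-s_{k+1,k}$.

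It then remains to compare the two sorted multisets of widths. Every width outside the block is untouched, so those entries cancel from the lexicographic comparison, and the common value $b_i$ carried at position $i$ cancels as well; it therefore suffices to compare $\{b_k,\dots,b_{i-1}\}$ against $\{b_k',\dots,b_{i-1}'\}$. I expect the main obstacle to be the lexicographic bookkeeping at the top of the block: the decisive term is the largest new width $b_i-s_{i,k}$ against the largest old block width $b_{i-1}$, and this is exactly where the third hypothesis is needed. Using the identity $s_{i,i+1}=b_{i+1}-b_i$, the inequality $s_{i,k}-s_{i,i+1}-2a_{k,i+1}>0$ should convert into the quantitative lower bound on $s_{i,k}$ that forces the largest changed width strictly below the width it must beat; propagating this bound down the monotone block via the second hypothesis then shows that no new width exceeds the old width it is compared with and that at least one decreases strictly, so the sorted width of $o'$ is lexicographically smaller than that of $o$. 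The delicate part throughout is the handling of ties in the sorted order, which is why I would carry out the comparison through the monotone arrangement of the $b_t$ supplied by the second hypothesis rather than by a naive position-by-position matching.
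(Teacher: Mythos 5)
There is no in-paper proof to compare against here: the paper imports this statement as ``a slight rewording of''~\cite[Lemma 3]{thingraphs}, so your argument must stand on its own, and it does not. Your bookkeeping is correct for the shift as the paper's own example defines it (the moved vertex $x=v_{o(k)}$ lands in position $i$): the changed widths are exactly $b_t'=b_{t+1}-s_{t+1,k}$ for $k\le t\le i-1$, the recursion $s_{j+1,k}=s_{j,k}-2a_{k,j+1}$ and the resulting monotonicity hold, and cancelling unchanged entries in the sorted-multiset comparison is legitimate. The genuine gap is the decisive step. You need the top changed width $b_{i-1}'=b_i-s_{i,k}$ to fall below $b_{i-1}$, i.e.\ $s_{i,k}>b_i-b_{i-1}$, and you assert the third hypothesis ``should convert into'' this bound. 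It cannot: that hypothesis says $s_{i,k}>s_{i,i+1}+2a_{k,i+1}=(b_{i+1}-b_i)+2a_{k,i+1}$, which constrains $s_{i,k}$ against the width gap \emph{above} position $i$ and carries no information about $b_i-b_{i-1}$. Concretely, take four vertices in the identity ordering with $w(v_0,v_2)=1$, $w(v_1,v_3)=1$, $w(v_2,v_3)=3$, and $(k,i)=(1,2)$. Then $(b_0,b_1,b_2)=(1,2,4)$, $s_{2,1}=1>0$, $b_1\le b_2$, and $s_{2,1}-s_{2,3}-2a_{1,3}=1-(-4)-2=3>0$, so all three hypotheses hold; yet moving $v_1$ to position $2$ gives the order $v_0,v_2,v_1,v_3$ with sorted width $(4,3,1)$, lexicographically \emph{larger} than the original $(4,2,1)$. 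So the statement you set out to prove, read with the paper's stated shift convention, is false, and no refinement of your tie analysis can rescue it.

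What the three hypotheses actually certify is the shift in which $x$ passes over $v_{o(i+1)}$ and lands in position $i+1$; that is why $s_{i,i+1}$ and $a_{k,i+1}$ appear at all, and it is how the paper itself invokes the lemma (in the proof of Lemma~\ref{lem:treeReducible}, via Lemma~\ref{lem:branchshiftTILO}, the conditions indexed by $i$ license shifting $u_1$ to $u_{i+1}$). Under that reading the changed widths are $b_t'=b_{t+1}-s_{t+1,k}$ for $k\le t\le i$: for $t\le i-1$ your own estimates give $b_t'<b_{t+1}\le b_i$ (hypotheses 1 and 2 plus slope monotonicity), while the one additional width is $b_i'=b_{i+1}-s_{i+1,k}=b_{i+1}-s_{i,k}+2a_{k,i+1}$, and hypothesis 3 is \emph{precisely} the inequality $b_i'<b_i$. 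Hence every changed width drops strictly below $b_i$, the maximum of the old changed block, so after your (valid) cancellation the sorted multiset strictly decreases, with no delicate tie handling needed. In the example above, shifting $v_1$ to position $3$ produces sorted width $(3,1,1)<(4,2,1)$, as predicted. In short, your machinery is sound but aimed at the wrong shift: redo the final comparison for the shift to position $i+1$ (equivalently, note the off-by-one in the lemma's closing sentence relative to the paper's shift convention), and the argument closes easily.
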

The analogous result for when $k > i$ also holds but is not needed for
this paper.

To make a connection between a linear ordering and a path view of a
tree ordering, the following structures are defined.  Recall that a
\emph{partition} $\mathcal{P}$ of a set $A$ is a collection of
pairwise disjoint subsets $\mathcal{P} = \{P_0,\ldots,P_k\}$ of $A$
whose union is $A$. In other words, each element of $A$ is in exactly
one subset $P_j$.

\begin{definition} \label{def:quotientgraph}
Given a graph $G=(V,E)$ and a partition $\mathcal{P}$ of $V$, 
the \textit{quotient graph} $Q = Q(G, \mathcal{P})$ is a weighted
graph whose vertices correspond to elements $P_j$ of $\mathcal{P}$ and
whose edges are defined as follows: 
if $v_s \in P_i$, $v_t \in P_j$, $i \neq j$, and $(s,t) \in E$ 
then there is an edge $(i,j)$ in $Q$
with weight $w(P_i,P_j)$. 
\end{definition}

Note that the quotient graph only accounts for edges that go between
different sets in the partition and ignores edges between vertices
that are in the same subset $P_j$.  The quotient graph corresponds to
a coarsening of the original graph in multilevel graph
algorithms~\cite{multilevelGraph,karypis1998parallel}.


A tree position $(C, \phi)$ for a graph $G$ can be used to define a
number of different partitions of $G$. This paper is interested in a
partitioning related to the branch shift operation with the following
definition.

\begin{definition} \label{def:bsqg}
Given a tree position $(C, \position)$ for a graph $G$ and any pair of nonadjacent
edges $x$ and $y$, we can construct a partition of the vertices of $G$ using 
the following approach.  Denote the edges in the unique simple path from $x$ to $y$
as $p_t$ with $t=1$ for the edge adjacent to $x$ and $t=i$ for the
edge adjacent to $y$.  Label the edges sharing a common vertex with
adjacent path edge pairs as follows: $p_0$ for pair $x$ and $p_1$, $p_{i+1}$
for pair $y$ and $p_i$, and $f_t$ for pair $p_{t-1}$ and $p_{t}$ with
$2 \leq t \leq i$.  See Figure \ref{fig:bsqg} for an example.  Define the sets $P_t$ as
\begin{align*}
P_0 &= \posfInv{\cutTopp{p_0}{y}},\\
P_1 &= \posfInv{\cutTopp{x}{y}},\\
P_t &= \posfInv{\cutTopp{f_t}{y}}\ \text{ for } 2 \leq t \leq i,\\
P_{i+1} &= \posfInv{\cutTopp{y}{x}},\\
P_{i+2} &= \posfInv{\cutTopp{p_{i+1}}{y}}.
\end{align*}
We will call the quotient graph created using the partition
$\mathcal{P}=\set{P_0,\dots,P_{i+2}}$ the \emph{branch shift quotient
  graph} of $G$ induced from $x$, $y$, and $(C,\position)$.
\end{definition}

\begin{figure}[!t]
\includegraphics{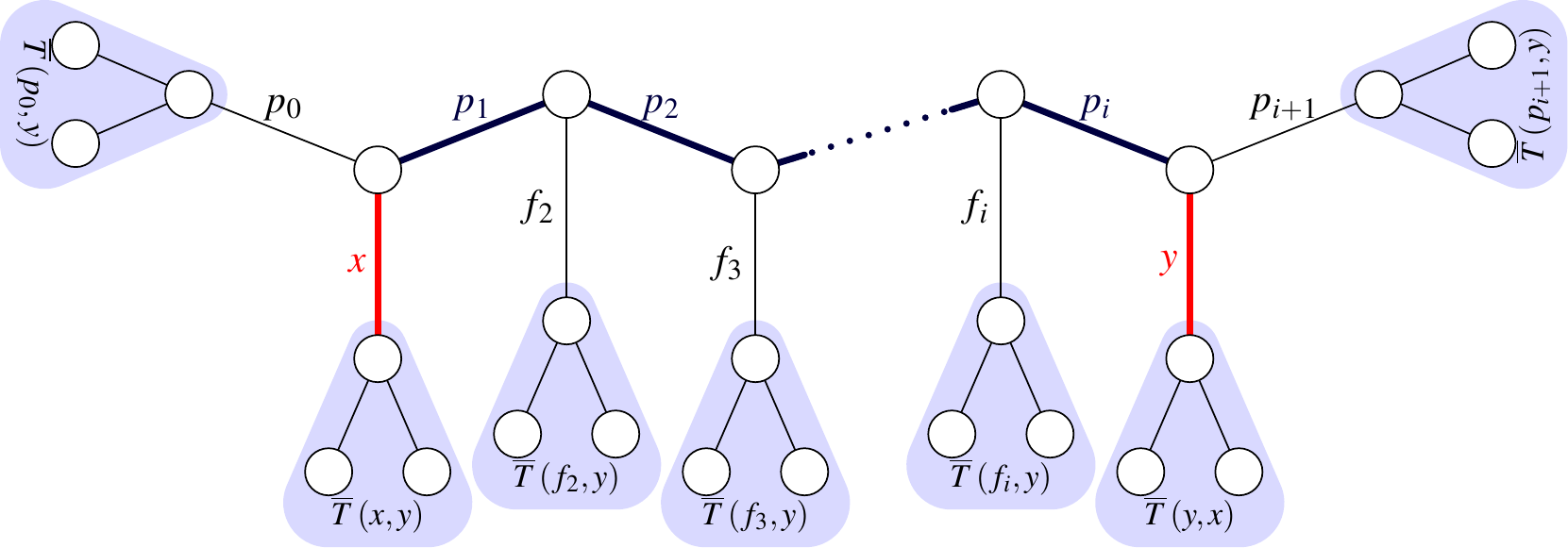}

\caption{Before cyclic branch shift of $x$ to $y$}
\label{fig:bsqg}
\vspace*{5ex}
%
\includegraphics{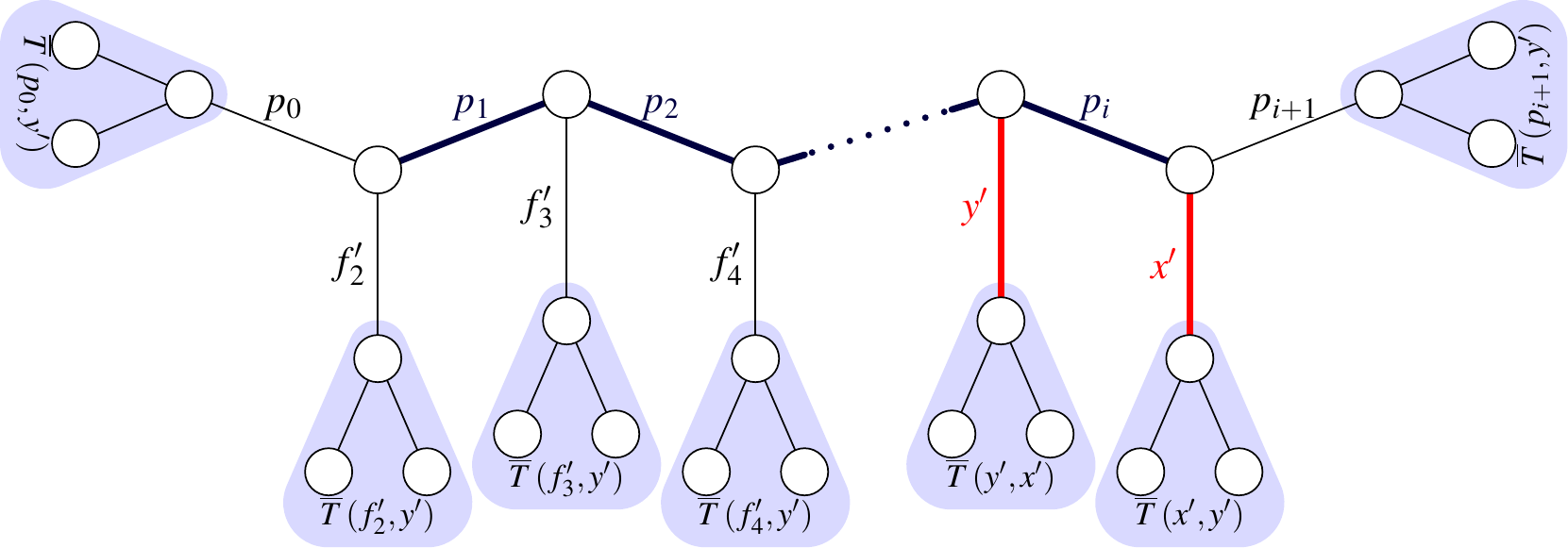}
\caption{After cyclic branch shift of $x$ to $y$}
\label{fig:shifted}
\end{figure}

The partitions $P_j$ of a branch shift quotient graph are
disjoint. The only vertices not in the union of subtrees cut from $C$
are the vertices along the path from $x$ to $y$, which must be
interior vertices.  Hence all of the boundary vertices are covered by
the subtrees and the mapping back to $G$ covers all of the vertices of
$G$.  Thus $\mathcal{P}$ is indeed a partition of $V$.

The smallest number of partitions in a branch shift quotient graph of
G induced from $x$, $y$, and $(C,\position)$ is four.  This occurs
when the path from $x$ to $y$ is just a single edge.  The largest
possible number of partitions in a branch shift quotient graph is
$|V|$, the number of vertices in $G$.  This occurs when the path from
$x$ to $y$ contains every interior vertex of $C$.  Each partition is
then a singleton set.  

TILO uses a linear ordering of vertices.  A natural ordering of the
vertices of $Q$ is to use an identity map.  That is, if $u_j$ is the
vertex of $Q$ corresponding to subset $P_j$ then it is in the $j$th
position of the linear order.  We demonstrate a connection between
this TILO ordering and the tree position with a series of Lemmas.  

\begin{lemma} \label{lem:widthEquiv}
Given a branch shift quotient graph $Q$ induced from edges $x$,
$y$, and tree position $(C,\position)$, the $t$-th width
$b_Q(t)$ using the identity map ordering on $Q$ is equal to $b(p_t)$, the
width of edge $p_t$ on the tree position $(C,\position)$ where path
edges $p_t$ are defined in Definition \ref{def:bsqg}.
\end{lemma}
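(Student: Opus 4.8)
The plan is to rewrite both $b_Q(t)$ and $b(p_t)$ as the weight $w(\cdot,\cdot)$ of the edges of $G$ running between the \emph{same} two complementary unions of blocks of $\mathcal{P}$, so that the desired equality reduces to the additivity of $w(A,B)$ over disjoint unions. Throughout I let $t$ range over $0 \le t \le i+1$, so that $p_t$ runs over all of the edges $p_0,\dots,p_{i+1}$ named in Definition~\ref{def:bsqg}; the quotient graph has $i+3$ vertices, so these are exactly the nontrivial cuts of its identity ordering.

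First I would compute the left-hand side. Under the identity ordering on $Q$ we have $A_t = \set{u_0,\dots,u_t}$, so by the TILO width formula $b_Q(t) = w_Q(A_t,\overline{A}_t) = \sum_{a \le t < b} w_Q(u_a,u_b)$. By Definition~\ref{def:quotientgraph} the weight of the quotient edge joining $u_a$ to $u_b$ is exactly $w(P_a,P_b)$, and edges internal to a single block contribute nothing; since the $P_j$ are pairwise disjoint, the cross-term sum collapses to $b_Q(t) = w\bigl(\bigcup_{a=0}^{t} P_a,\ \bigcup_{a=t+1}^{i+2} P_a\bigr)$. Next I would treat the right-hand side: by the definition of width, cutting the tree edge $p_t$ splits $C$ into \cutT{p_t}{g} and \cutTopp{p_t}{g}, giving $b(p_t) = w\bigl(\posfInv{\cutT{p_t}{g}},\, \posfInv{\cutTopp{p_t}{g}}\bigr)$. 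The whole proof then comes down to identifying these two sets of vertices with $\bigcup_{a \le t} P_a$ and $\bigcup_{a > t} P_a$.

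That structural identification is the step I expect to be the main obstacle. The key observation is that the blocks $P_0,\dots,P_{i+2}$ are precisely the vertex images of the subtrees hanging off the path from $x$ to $y$: two of them ($P_0$ via $p_0$ and $P_1$ via $x$) hang off the first interior path vertex, two ($P_{i+1}$ via $y$ and $P_{i+2}$ via $p_{i+1}$) hang off the last, and exactly one ($P_t$ via $f_t$) hangs off each intermediate path vertex $w_t$. For $1 \le t \le i$ the edge $p_t$ joins consecutive interior path vertices, so cutting it cleanly leaves the subtrees $P_0,\dots,P_t$ on the $x$-side and $P_{t+1},\dots,P_{i+2}$ on the $y$-side, and the identification is transparent. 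The care is needed in the two endpoint cases $t=0$ and $t=i+1$, where $p_0$ and $p_{i+1}$ are not path edges and sit at a vertex carrying two hanging blocks: here one must verify that cutting $p_0$ isolates exactly $P_0$ (so the two sides are $\set{P_0}$ and its complement) and that cutting $p_{i+1}$ isolates exactly $P_{i+2}$. Since $w(A,B)$ is symmetric, it is immaterial which side is labeled \cutT{}{} and which \cutTopp{}{}; in every case the two complementary unions coincide with those produced by the identity ordering on $Q$, and comparing the two collapsed expressions yields $b(p_t) = b_Q(t)$.
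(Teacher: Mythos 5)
Your proposal is correct and follows essentially the same route as the paper's proof: both collapse $b_Q(t)$ to $w\bigl(\bigcup_{j \le t} P_j,\ \bigcup_{k > t} P_k\bigr)$ via Definition~\ref{def:quotientgraph}, write $b(p_t)$ as the weight across the cut at $p_t$, and then identify the two complementary unions of blocks with the vertex images of the two subtrees (the paper does this via containment plus the partition property, you do it by direct structural enumeration of which blocks hang off each side). Your explicit treatment of the endpoint cases $t=0$ and $t=i+1$ is in fact slightly more careful than the paper, whose reference edge $p_0$ in \cutT{p_t}{p_0} is ill-defined when $t=0$.
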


\begin{proof}
The $t$-th width using the identity map linear ordering of vertices
is the sum of edge weights for edges $(j,k) \in Q$ such that $v_j$ is 
in set $\{v_0,\ldots,v_t\}$ and $v_k$ is in set
$\{v_{t+1},\ldots,v_{i+2}\}$. Since edge weight $(j,k)$ is defined as
weight $w(P_j,P_k)$, this becomes
\begin{equation*}
b_Q(t) = w\left( \bigcup_{0 \leq j \leq t} P_j , \bigcup_{t < k \leq
i+2} P_k\right).
\end{equation*}
Recall that the boundary at an edge in $C$ can be defined in terms of 
all the edges of $G$ that pass through it, $b(p_t) = \sum_{g \in
R(p_t)} w(g)$, or in terms of the edges between the sets of vertex images
of the subtrees created by cutting  at $p_t$,
\begin{equation*}
b(p_t)=
w\left(\posfInv{\cutT{p_t}{p_0}},\posfInv{\cutTopp{p_t}{p_0}}\right). 
\end{equation*}
Edge $p_t$ is not contained in any subtree used to define the
partitions in Definition \ref{def:bsqg} therefore each subtree (and
corresponding partition) must be completely on one side of the cut of
edge $p_t$. 
Thus  
\begin{align*}
P_j &\subseteq \posfInv{\cutT{p_t}{p_0}} \ \text{ for } \   0\leq j \leq t \  
\text{and}\\
P_k &\subseteq \posfInv{\cutTopp{p_t}{p_0}} \  \text{ for } \   t < k \leq i+2.
\end{align*}
Since subsets $P_j$, $P_k$ form a partition and
\begin{align*}
\posfInv{\cutT{p_t}{p_0}} \ \bigcap \ \posfInv{\cutTopp{p_t}{p_0}} = \emptyset
\end{align*}  
then
\begin{align*}
\posfInv{\cutT{p_t}{p_0}} \subseteq  \bigcup_{0 \leq j \leq t} P_j 
\quad \text{ and }   \quad 
\posfInv{\cutTopp{p_t}{p_0}} \subseteq  \bigcup_{t < k \leq i+2} P_k.
\end{align*}  
With the two pair of sets being equal, the weights are also equal:

\begin{align*}
b_Q(t) &= w( \bigcup_{0 \leq j \leq t} P_j , \bigcup_{t < k <= i+2} P_k)\\
 &= w(\posfInv{\cutT{p_t}{p_0}},\posfInv{\cutTopp{p_t}{p_0}}) \\
 &= b(p_t). 
\end{align*}
\end{proof}

A branch shift on tree position $(C,\position)$ creates 
a new tree position $(C',\position)$ for $G$. Let the 
branch shift be the shifting of edge $x$
to edge $y$ for any pair $x$ and $y$ of nonadjacent edges of $C$.
Let $Q$ be the branch shift quotient graph of $G$ induced from $x$,
$y$, and $(C,\position)$.  
Using the notation from Definition \ref{def:bsqg}, the before and after
views of a branch shift are presented in  Figures  \ref{fig:bsqg} and
 \ref{fig:shifted}.
Note that the edges $p_0$ to $p_{i+2}$ are not modified  nor are the
subtrees opposite the cut of edges $x'$, $y'$, $f_2'$ to $f_i'$.
Let $Q'$ be the branch shift quotient graph of $G$ induced from $f_2'$,
$x'$, and $(C',\position)$.  

\begin{lemma}
\label{lem:branchmovequotient}
The branch shift quotient graph $Q'$ defined above is equal to the
original branch shift quotient graph $Q$.
\end{lemma}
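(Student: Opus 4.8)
The plan is to reduce the claimed equality of weighted graphs to an equality of the underlying partitions of $V$. By Definition~\ref{def:quotientgraph}, the quotient graph $Q(G,\mathcal{P})$ is completely determined by the partition $\mathcal{P}$: its vertices are the blocks of $\mathcal{P}$ and its edge weights are the numbers $w(P_j,P_k)$. Hence it suffices to show that the partition of $V$ induced by $(f_2',x',C')$ in the sense of Definition~\ref{def:bsqg} is \emph{the same collection of subsets} as the partition $\set{P_0,\dots,P_{i+2}}$ induced by $(x,y,C)$. I do not expect the two blocks to appear in the same order along their respective paths, but equality as sets of subsets is all that Definition~\ref{def:quotientgraph} requires for $Q'=Q$.

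First I would record exactly what the branch shift does to $C$, using the cyclic shift description together with Figures~\ref{fig:bsqg} and~\ref{fig:shifted}. The decisive structural observations are: (i) the path edges $p_1,\dots,p_i$ are never disconnected, so the interior path vertices still form a single path in $C'$ joined in the same order by $p_1,\dots,p_i$; and (ii) each branch edge ($x$, $y$, $f_2,\dots,f_i$, and the two end edges $p_0,p_{i+1}$) is only ever reattached at its path side endpoint, so the subtree on its far side, and therefore the set of $G$ vertices it contributes, is carried along unchanged. In the language of Definition~\ref{def:bsqg}, each $\cutTopp{\cdot}{\cdot}$ block is exactly one of these far side subtrees, so the cyclic shift merely permutes the blocks $P_0,\dots,P_{i+2}$ among the attachment points without altering any of them. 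This is precisely the content of the earlier remark that the subtrees opposite the cut of edges $x'$, $y'$, $f_2'$ to $f_i'$ are not modified.

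Next I would identify the two edges $f_2'$ and $x'$ in $C'$ as the branch edges sitting at the two extreme vertices of the unchanged path, and trace the unique simple path between them. Because $p_1,\dots,p_i$ still connect the interior vertices in a line and $f_2',x'$ hang off its two ends, this path runs through \emph{every} interior vertex of the shifted configuration. Applying Definition~\ref{def:bsqg} to $(f_2',x',C')$ then reads off one block for each branch hanging off this path: the two end blocks $\cutTopp{f_2'}{x'}$ and $\cutTopp{x'}{f_2'}$, the two extra blocks at the endpoints, and the interior branches. By observation (ii) these are exactly the subtrees $P_0,\dots,P_{i+2}$, merely enumerated in the shifted order; as a collection of subsets of $V$ they coincide with $\set{P_0,\dots,P_{i+2}}$. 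Feeding this common partition into Definition~\ref{def:quotientgraph} yields $Q'=Q$.

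The step I expect to be the genuine obstacle is the bookkeeping in the middle: one must pin down, for the specific pair $(f_2',x')$, which branch lands at which attachment point after the cyclic shift, and in particular verify that $f_2'$ and $x'$ really are the end branches so that the new path sweeps out all of the interior vertices. If one of them were an interior branch the induced partition would be strictly coarser and the equality would fail, so this verification, essentially a careful reading of the relabelling in Figure~\ref{fig:shifted}, is where the real work lies; once the attachment pattern is fixed, matching the blocks is immediate.
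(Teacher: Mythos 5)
Your proposal is correct and follows essentially the same route as the paper's proof: both arguments observe that the partition blocks are the far-side subtrees of the branch edges, that the branch shift only moves attachment points and never touches those subtrees, and that the blocks of the partition induced by $(f_2',x',C')$ are therefore just a relabelling (a permutation) of $P_0,\dots,P_{i+2}$, so the common partition forces $Q'=Q$ via Definition~\ref{def:quotientgraph}. The only difference is one of completeness rather than approach: the paper records the explicit relabelling ($P_0'=P_0$, $P_{i+2}'=P_{i+2}$, $P_{i+1}'=P_1$, $P_t'=P_{t+1}$ for $1\leq t\leq i$), which you correctly identify as the remaining bookkeeping and which is also needed later for Lemma~\ref{lem:branchshiftTILO}.
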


\begin{proof}
The each $P_j$ of the partition used by the original branch shift
quotient graph is defined in Definition~\ref{def:bsqg} using the
subtree from the far side cut of an edge adjacent to, but not on, 
the path between $x$ and $y$.  The branch shift operation cuts these
edges and then reattaches them to vertices on the path between $x$
and $y$.  This operation does not touch the subtree on the far side
cut of the edge.  Thus the subsets $P_k'$ used by $Q'$ are just a
relabelling of original subsets; explicitly, $P_0' = P_0$,
$P_{i+2}'=P_{i+2}$, 
$P_{i+1}'=P_{1}$, and 
$P_{t}'=P_{t+1}$ for $1\leq t \leq i$.
Therefore, the partition defined after the branch shift is the
same as the partition before the shift and hence the quotient graphs
are equal.
\end{proof}

Since the quotient graphs are equal, i.e.\ $Q' = Q$, we can compare
the linear TILO orderings on $Q$ defined by the tree positions $(C, \phi)$
and $(C', \phi)$.

\begin{lemma}
\label{lem:branchshiftTILO}
The TILO ordering on $Q$ defined by $(C',\phi)$ is the result of
starting with the ordering defined by $(C,\phi)$ and cyclicly shifting
the vertex at position $1$ to position $i+1$.  
\end{lemma}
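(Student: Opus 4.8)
The plan is to compute both TILO orderings explicitly and then recognize their relationship as a single instance of the shift operation defined in Section~\ref{sect:compare}. Since the identity-map ordering on a branch shift quotient graph simply lists the partition blocks in index order, the ordering defined by $(C,\phi)$ on $Q$ is $P_0, P_1, P_2, \ldots, P_i, P_{i+1}, P_{i+2}$, with the block $P_t$ occupying position $t$.

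For the ordering defined by $(C',\phi)$, I would first invoke Lemma~\ref{lem:branchmovequotient}, which identifies $Q'$ with $Q$ through the relabeling $P_0' = P_0$, $P_t' = P_{t+1}$ for $1 \leq t \leq i$, $P_{i+1}' = P_1$, and $P_{i+2}' = P_{i+2}$. The identity-map ordering on $Q'$ places the block $P_t'$ at position $t$, so rewriting each $P_t'$ in terms of the original labels yields the ordering $P_0, P_2, P_3, \ldots, P_{i+1}, P_1, P_{i+2}$.

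The final step is to verify that this second sequence is obtained from the first by the shift from position $1$ to position $i+1$. By the definition of a shift, such an operation applies to the block of positions $[1, i+1]$ the cyclic permutation that sends the entry at position $1$ to position $i+1$; concretely, it removes $P_1$ from position $1$, slides $P_2, \ldots, P_{i+1}$ each one position toward the front, and reinserts $P_1$ at position $i+1$, leaving positions $0$ and $i+2$ fixed. Applying this to $P_0, P_1, \ldots, P_{i+2}$ produces exactly $P_0, P_2, \ldots, P_{i+1}, P_1, P_{i+2}$, which matches the ordering computed above and completes the argument. The only real care needed is to align the indexing convention of the shift definition (in particular the direction in which the intermediate blocks slide, as illustrated by the $N=8$ example) with the relabeling supplied by Lemma~\ref{lem:branchmovequotient}; once the endpoints $P_0$ and $P_{i+2}$ are confirmed fixed and $P_1$ is tracked to position $i+1$, the remainder is routine bookkeeping.
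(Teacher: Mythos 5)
Your proposal is correct and follows essentially the same route as the paper's own proof: both invoke the relabelling $P_0'=P_0$, $P_t'=P_{t+1}$ for $1\leq t\leq i$, $P_{i+1}'=P_1$, $P_{i+2}'=P_{i+2}$ from Lemma~\ref{lem:branchmovequotient}, substitute the original labels into the identity-map ordering of $Q'$, and recognize the resulting sequence $P_0,P_2,\ldots,P_{i+1},P_1,P_{i+2}$ as the cyclic shift of position $1$ to position $i+1$. The only difference is cosmetic: you phrase the argument in terms of the partition blocks $P_t$ while the paper uses the corresponding quotient-graph vertices $u_t$.
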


\begin{proof}
 Let $u_j$ and $u_j'$ be the vertices of $Q$ and $Q'$,
respectively.  As noted in the proof of Lemma~\ref{lem:branchmovequotient}, the
subsets $P_k'$ used by $Q'$ are relabelling of the subsets $P_j$ used
by $Q$. This results in the following relabelling of vertices:
$u_0' = u_0$, $u_{i+2}'=u_{i+2}$, $u_{i+1}'=u_{1}$, and 
$u_{t}'=u_{t+1}$ for $1\leq t \leq i$.  The identity map linear
ordering of $Q'$ is 
$(u_0',u_1',u_2',\ldots, u_i',u_{i+1}',u_{i+2}')$.
Substituting $Q$ vertices into this sequence yields the vertex order
$(u_0,u_2,u_3,\ldots, u_i,u_{i+1},u_1,u_{i+2})$.
This is precisely the result of applying a cyclic shift of vertex
$u_1$ to vertex $u_{i+1}$ of the identity map linear ordering of $Q$.  
\end{proof}

\begin{corollary}
\label{coro:wrquotient}
A tree position $(C,\phi)$ of a graph $G$ is weakly reducible if 
there is a pair of edges in $C$ such that the TILO ordering of
the induced branch shift quotient graph $Q$ of $G$ is weakly
reducible.
\end{corollary}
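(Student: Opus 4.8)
The plan is to use the three preceding lemmas to convert a width-reducing shift on the quotient graph into a width-reducing branch shift on the tree, and then to show that lowering the quotient width forces the whole tree width down. Suppose a pair of edges $x,y$ of $C$ makes the TILO ordering of the induced branch shift quotient graph $Q$ weakly reducible. By Definition~\ref{def:stronglyirred} and Lemma~\ref{lem:TILOreduction}, that ordering then admits a shift, say of position $k$ to position $\ell$, that strictly lowers its TILO width. Writing $m$ for the number of path edges (the index called $i$ in Definition~\ref{def:bsqg}), I would realize this shift, by the bookkeeping of Lemma~\ref{lem:branchshiftTILO} applied to an arbitrary reducing shift rather than only the distinguished shift of position $1$ to position $m+1$, as a genuine branch shift of $(C,\phi)$ that slides the branch $P_k$ along the path to the slot just past $P_\ell$; call the result $(C',\phi)$.

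Next I would pin down exactly which edge widths this branch shift disturbs. Because a branch shift preserves every block $P_j$ as a vertex set (the content of Lemma~\ref{lem:branchmovequotient}), each edge of $C$ whose width is not among the quotient widths $b(p_0),\dots,b(p_{m+1})$ --- namely the branch-attaching edges $x,y,f_2,\dots,f_m$ together with every edge interior to a block $P_j$ --- keeps its width, since such a width equals $w(P_j, V\setminus P_j)$ or the analogous boundary weight of a sub-block, which depends only on the unchanged block contents. By Lemma~\ref{lem:widthEquiv} the remaining widths $b(p_0),\dots,b(p_{m+1})$ are precisely the TILO-width multiset of $Q$. Hence the multiset of all tree-edge widths of $(C,\phi)$ is $D\sqcup W$, where $W$ is the TILO-width multiset of $Q$ and $D$ collects the invariant remaining widths; after the shift it becomes $D\sqcup W'$, with $D$ literally unchanged and $W'$ the TILO widths of $Q$ under the shifted ordering.

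The last ingredient is a monotonicity fact: if $W,W'$ are multisets of equal size with $\mathrm{sort}_{\downarrow}(W')<_{\mathrm{lex}}\mathrm{sort}_{\downarrow}(W)$, then $\mathrm{sort}_{\downarrow}(D\sqcup W')<_{\mathrm{lex}}\mathrm{sort}_{\downarrow}(D\sqcup W)$ for every fixed multiset $D$. I would prove this through the threshold-counting description of the lexicographic order: $\mathrm{sort}_{\downarrow}(W')<_{\mathrm{lex}}\mathrm{sort}_{\downarrow}(W)$ holds exactly when there is a value $v$ with $N_{\ge u}(W')=N_{\ge u}(W)$ for all $u>v$ and $N_{\ge u}(W')<N_{\ge u}(W)$ at $u=v$, where $N_{\ge u}$ counts elements at least $u$. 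Since $N_{\ge u}(D\sqcup W)=N_{\ge u}(D)+N_{\ge u}(W)$, adding the common $D$ shifts both sides of every comparison equally and so preserves the equalities above $v$ and the strict inequality at $v$. Applied to the reduction $\mathrm{sort}_{\downarrow}(W')<_{\mathrm{lex}}\mathrm{sort}_{\downarrow}(W)$ supplied by the hypothesis, this shows the tree width of $(C',\phi)$ is strictly smaller than that of $(C,\phi)$, so $(C,\phi)$ is weakly reducible. I expect the real work to lie in the first two steps --- verifying that an arbitrary reducing shift $k\to\ell$, and not merely the shift of Lemma~\ref{lem:branchshiftTILO}, is induced by an honest branch shift that perturbs only the path-edge widths --- while the monotonicity step is routine once phrased through threshold counts.
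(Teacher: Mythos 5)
Your proposal is correct and follows essentially the same route as the paper's own proof: translate the width-reducing TILO shift on the quotient graph into a branch shift, observe that only the path-edge widths change and that these coincide with the quotient widths (Lemma~\ref{lem:widthEquiv}), and conclude that a lexicographic decrease of that sub-multiset forces a lexicographic decrease of the full width multiset. The only difference is one of detail: where the paper simply asserts that the TILO shift ``determines a branch shift'' and that ``the same condition is true of the overall multiset,'' you make both steps explicit, extending the bookkeeping of Lemma~\ref{lem:branchshiftTILO} to arbitrary shifts and proving the multiset monotonicity via threshold counts.
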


\begin{proof}
Note that if $(C',\phi)$ is the result of a branch shift of edge $x$
to edge $y$ on a tree position $(C, \phi)$ then the width of the
edges that are not along the path from $x$ to $y$ do not change. The
only widths that change are along the path from $x$ to $y$,
and by Lemma~\ref{lem:branchshiftTILO}, the way they change is
determined by the changes to the induced orderings of the quotient
graph. 

Assume there is a path in which the induced ordering on the quotient
graph is weakly reducible, i.e.\ admits a shift that reduces its
width. This TILO shift determines a branch shift of the tree position.
Under this branch shift, the widths of the edges outside the path do
not change. Along the path, some of the widths may increase, but at
least one width strictly decreases and no width increases to a value
greater than or equal to the highest width that strictly decreases.
The same condition is true of the overall multiset of widths of the
tree position, so the width of the resulting tree position is strictly
less than the original width.

\end{proof}


\section{Reduction criteria}
\label{sect:reduction}

Corollary~\ref{coro:wrquotient} suggests a straightforward way to find
strongly irreducible tree positions of graphs: take all pairs of
edges, form the induced branch shift quotient graph, look for valid
TILO shifts, and apply the corresponding branch shifts on the tree
position.  Repeat this process until there are no valid TILO shifts,
and then perform a more comprehensive check to ensure that the tree
position is indeed strongly irreducible.  However, such an approach
involves a great deal of redundant computation. In particular, if two
paths overlap along a stretch of edges, then there will be TILO moves
in the two different quotients that correspond to the same branch
shifts.  In a naive implementation these will be checked multiple
times.  We therefore define criteria for weak reducibility of a tree
position that are determined by the quotient graphs but do not
require explicitly computing them.

\begin{definition} \label{def:adjwSlope}
Given a tree position $(C,\phi)$ and a pair of edges $e, g$ of $C$,
define the \emph{adjacency weight}
\begin{equation*}
a(e,g) = w\left(R(e) \cap R(g)\right) = w\left(\posfInv{\cutTopp{e}{g}},\posfInv{\cutTopp{g}{e}}\right)
\end{equation*}
to be the sum of the weights of the edges of $G$ that pass through both
$e$ and $g$ or equivalently the sum of the weights of edges of $G$ between
boundary vertices in \cutTopp{e}{g}  and boundary vertices in \cutTopp{g}{e}.

Define the \emph{slope} 
\begin{align*}
s(e,g) &= w\left(R(g) \setminus R(e)\right) -  w\left(R(e) \cap R(g)\right)\\
       &= w\left(\posfInv{\cutT{g}{e}} \setminus \posfInv{\cutTopp{e}{g}}, \cutTopp{g}{e}\right) - w\left(\posfInv{\cutTopp{e}{g}},\posfInv{\cutTopp{g}{e}}\right)  
\end{align*}
to be the sum of the weights of the edges of
$G$ that pass through $g$ but not through $e$  minus 
the sum of the weights of edges of $G$ that
pass through both edges.
The slope $s(e,g)$ is the amount that the width of edge $e$ changes 
if edge $g$ is disconnected from the tree and reattached on the far side
of $e$.
\end{definition}

 From the definitions, it follows
that $$a(e,g) + s(e,g) = w(R(g) \setminus R(e)).$$
From basic set theory, 
$$R(g) = (R(g) \cap R(e)) \cup (R(g) \setminus R(e))$$ 
where the sets $R(g) \cap R(e)$ and $R(g) \setminus R(e)$ are
disjoint. Thus we have $b(g) = a(e,g) + a(e,g) + s(e,g)$. Rearranging
this gives us an alternate way to calculate $s(e,g)$:

\begin{lemma}
\label{lem:equivslopedef}
The slope $s(e,g)$ can also be calculated by the equation
\begin{align*}
s(e,g) = b(g) - 2a(e,g).
\end{align*}
\end{lemma}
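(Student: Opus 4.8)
The plan is to obtain the identity directly from the set-theoretic definitions in Definition~\ref{def:adjwSlope}, exploiting the fact that the edges of $G$ passing through $g$ split cleanly according to whether or not they also pass through $e$. This is essentially the computation already sketched in the paragraph preceding the lemma, and I would make its structure explicit in three short steps.

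First I would record the two quantities furnished by the definitions. Since $a(e,g) = w(R(e) \cap R(g))$ and $s(e,g) = w(R(g) \setminus R(e)) - w(R(e) \cap R(g))$, adding these gives
\[
a(e,g) + s(e,g) = w(R(g) \setminus R(e)).
\]
Next I would invoke the disjoint decomposition $R(g) = (R(g) \cap R(e)) \cup (R(g) \setminus R(e))$. The key---and only---fact needed is that $w$ is additive over disjoint sets of edges, which is immediate from its definition $w(F) = \sum_{f \in F} w(f)$. Because $b(g) = w(R(g))$, this additivity yields
\[
b(g) = w(R(g) \cap R(e)) + w(R(g) \setminus R(e)) = a(e,g) + (a(e,g) + s(e,g)) = 2a(e,g) + s(e,g).
\]

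Finally I would rearrange $b(g) = 2a(e,g) + s(e,g)$ to obtain the claimed equation $s(e,g) = b(g) - 2a(e,g)$. I do not expect any real obstacle here: the entire argument reduces to a single application of additivity of the weight function over a disjoint union. The only point that warrants an explicit word is that $R(g) \cap R(e)$ and $R(g) \setminus R(e)$ are genuinely disjoint with union $R(g)$, which is a basic set-theoretic fact and is exactly what lets us split $w(R(g))$ into the two pieces identified with $a(e,g)$ and with $a(e,g) + s(e,g)$.
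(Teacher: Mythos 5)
Your proof is correct and follows exactly the same route as the paper: both add the definitions of $a(e,g)$ and $s(e,g)$ to get $w(R(g)\setminus R(e))$, then split $R(g)$ into the disjoint pieces $R(g)\cap R(e)$ and $R(g)\setminus R(e)$ and use additivity of $w$ to conclude $b(g) = 2a(e,g) + s(e,g)$. Your write-up merely makes explicit the additivity step that the paper's preceding paragraph leaves implicit, so there is nothing to add.
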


If we think of the vertices $u_e$ and $u_g$ in a quotient
graph that correspond to the subtrees \cutTopp{e}{g} and
\cutTopp{g}{e}, respectively, then the adjacency weight $a(e,g)$
is the weight of edge $(e,g)$ of the quotient graph.
If $u_e$ and $u_g$ are vertices in a branch shift quotient graph
then the slope $s(e,g)$ is $\pm s_{e,g}$, the slope (as defined
in~\cite{thingraphs}) of a  linear ordering of the vertices of
the induced quotient graph.
In particular, we have the following:

\begin{lemma}
\label{lem:slopetranslation}
Given $Q$, a branch shift quotient graph of graph $G$ induced by edges
$x$, $y$, and tree position $(C,\position)$, let $e=p_i$, an edge
on the path from $x$ to $y$ and $g=f_k$, a non-path edge adjacent
to a path edge from $x$ to $y$.  Using the identity map induced linear
TILO ordering on $Q$, we find that
\begin{align*}
s(e,g)= s(p_i,f_k) = \begin{cases}
-s_{i,k} & \text{ if } k \leq i,\\
s_{i,k} & \text{ if } k > i
\end{cases}
\end{align*}
where the slope $s_{i,k}$ of a linear order is defined in
(\ref{eqn:slopeTILO}).
\end{lemma}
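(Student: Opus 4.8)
The plan is to express both sides of the claimed identity purely in terms of the partition weights $w(P_j,P_{j'})$ of the branch shift quotient graph $Q$ and then match them. For the left-hand side I would start from Lemma~\ref{lem:equivslopedef}, which rewrites the tree-position slope as $s(p_i,f_k)=b(f_k)-2a(p_i,f_k)$, so the problem reduces to identifying the width $b(f_k)$ and the adjacency $a(p_i,f_k)$ geometrically. Since $f_k$ is a branch edge, cutting it off isolates the single subtree $\posfInv{\cutTopp{f_k}{y}}=P_k$ on its far side, so applying the width definition with reference edge $y$ gives $b(f_k)=w\!\left(P_k,\,V\setminus P_k\right)$. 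For the adjacency, the branch of $f_k$ directed away from the path is again $P_k$, i.e.\ $\posfInv{\cutTopp{f_k}{p_i}}=P_k$, so only the companion factor $\posfInv{\cutTopp{p_i}{f_k}}$ remains to be pinned down.

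The key geometric observation — and the step I expect to be the crux — is that the component of $C$ cut at $p_i$ containing the branch edge $f_k$ is determined precisely by whether $k\le i$ or $k>i$. Walking along the path from $x$ to $y$, the branch $f_k$ hangs off the path on the $x$-side of $p_i$ exactly when $k\le i$ and on the $y$-side exactly when $k>i$. Combining this with the identification established inside the proof of Lemma~\ref{lem:widthEquiv} — that cutting $p_i$ separates $A_i=\bigcup_{j\le i}P_j$ from $\overline{A}_i=\bigcup_{j>i}P_j$ — I would conclude that $\posfInv{\cutTopp{p_i}{f_k}}$ equals $\overline{A}_i$ when $k\le i$ and equals $A_i$ when $k>i$. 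Hence $a(p_i,f_k)=w(P_k,\overline{A}_i)$ in the first case and $a(p_i,f_k)=w(P_k,A_i)$ in the second.

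The remaining work is routine bookkeeping with set differences. Evaluating the TILO slope (\ref{eqn:slopeTILO}) for the identity ordering on $Q$ with $v_{o(k)}=P_k$ yields $s_{i,k}=w(P_k,\overline{A}_i\setminus P_k)-w(P_k,A_i\setminus P_k)$, which I would simplify using $P_k\subseteq A_i$ when $k\le i$ (so $\overline{A}_i\setminus P_k=\overline{A}_i$) and $P_k\subseteq\overline{A}_i$ when $k>i$ (so $A_i\setminus P_k=A_i$). On the tree-position side I would split $b(f_k)=w(P_k,V\setminus P_k)$ into its disjoint $A_i$ and $\overline{A}_i$ pieces; substituting into $b(f_k)-2a(p_i,f_k)$ lets the factor of two cancel exactly one copy of the adjacency term, leaving $w(P_k,A_i\setminus P_k)-w(P_k,\overline{A}_i)=-s_{i,k}$ when $k\le i$ and $w(P_k,\overline{A}_i\setminus P_k)-w(P_k,A_i)=s_{i,k}$ when $k>i$, which is the stated formula. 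The only genuine obstacle is the geometric side-determination in the previous paragraph; once that sign is fixed, the conclusion is an algebraic identity among disjoint-set weights.
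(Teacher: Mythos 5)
Your proof is correct, but it takes a genuinely different route from the paper's. The paper's proof works directly from the two-term definition of slope in Definition~\ref{def:adjwSlope}: it expands the TILO slope $s_{i,k}$ as a difference of weights between $P_k$ and the two unions $\bigcup_{j>i,\, j\ne k}P_j$ and $\bigcup_{j\le i,\, j\ne k}P_j$, identifies those unions with set differences of the subtrees obtained by cutting at $p_i$ and at $f_k$, and recognizes the resulting expression as literally the defining formula for $s(e,g)$; the sign flip for $k\le i$ appears because the two unions swap their correspondence with the two sides of the cut. You instead route everything through Lemma~\ref{lem:equivslopedef}, $s(p_i,f_k)=b(f_k)-2a(p_i,f_k)$ --- a lemma the paper's proof never invokes --- and reduce the claim to two clean geometric identifications, $b(f_k)=w(P_k,V\setminus P_k)$ and $a(p_i,f_k)=w(P_k,\overline{A}_i)$ or $w(P_k,A_i)$ according as $k\le i$ or $k>i$, after which the conclusion is an algebraic cancellation among disjoint-set weights. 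Both arguments hinge on exactly the same geometric crux (your side determination: $f_k$ hangs off the $x$-side of $p_i$ precisely when $k\le i$, which is also the source of the paper's union swap), so neither is more general; what yours buys is a more transparent sign computation --- the sign falls out of which of the two pieces of $b(f_k)$ gets doubled and subtracted --- and it avoids the paper's heavier set-difference manipulations such as $\posfInv{\cutT{p_i}{f_k}} \setminus \posfInv{\cutTopp{f_k}{p_i}} = \posfInv{\cutT{f_k}{p_i}} \setminus \posfInv{\cutTopp{p_i}{f_k}}$, at the modest cost of depending on the earlier lemma. One small point of care: for the extreme non-path edges (e.g.\ $g=y$ itself, index $k=i+1$) the reference edge $y$ cannot be used to cut $g$, so you should either invoke the reference-independence of width or choose a reference edge on the path; this is cosmetic and does not affect the argument.
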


\begin{proof}
Using the notation from Definition \ref{def:bsqg} and Lemma \ref{lem:widthEquiv}, the 
path edge $p_i$ corresponds to 
the location between quotient graph vertices $u_i$ and $u_{i+1}$ in the identity map linear ordering.
The edge $f_k$ corresponds to vertex $u_k$ of the quotient graph.
Recall that $s_{i,k}$ is defined as the sum of the weights of edges from vertex
$v_k$ to vertices with indices strictly greater than $i$ minus the sum of the
edge weights from vertex $v_k$ to vertices with indices less than or equal to
$i$.  
\begin{align*}
s_{i,k} &= \sum_{\substack{i < j\\j \neq k}} w(k,j) - \sum_{\substack{0 \leq j \leq i \\j \neq k}} w(k,j)\\
        &= w \Bigl( P_k,\bigcup_{\substack{i < j\\j \neq k}} P_j \Bigr) - w\Bigl(P_k, \bigcup_{\substack{0 \leq j \leq i \\j \neq k}} P_j\Bigr) 
\end{align*}

Cutting at edge $p_i$ and set subtracting $P_k$ creates the sets corresponding to the two unions in the above equation.
When $k>i$ this is
$\bigcup_{\substack{i < j\\j \neq k}} P_j = 
\posfInv{\cutT{p_i}{f_k}} \setminus \posfInv{\cutTopp{f_k}{p_i}}$
and $\bigcup_{\substack{0 \leq j \leq i \\j \neq k}} = \posfInv{\cutTopp{p_i}{f_k}} \setminus \posfInv{\cutTopp{f_k}{p_i}}
=\posfInv{\cutTopp{p_i}{f_k}}$
with
$P_k = \cutTopp{f_k}{p_i}$.  Using
$\posfInv{\cutT{p_i}{f_k}} \setminus \posfInv{\cutTopp{f_k}{p_i}} 
= \posfInv{\cutT{f_k}{p_i}} \setminus \posfInv{\cutTopp{p_i}{f_k}} $ and substituting into the above equation yields
\begin{align*}
s_{i,k} &= w\left(\cutTopp{f_k}{p_i}  ,\posfInv{\cutT{f_k}{p_i}} \setminus \posfInv{\cutTopp{p_i}{f_k}} \right) - 
w\left( \cutTopp{f_k}{p_i}, \posfInv{\cutTopp{p_i}{f_k}} \right),\\
 &= w\left(\cutTopp{g}{e}  ,\posfInv{\cutT{g}{e}} \setminus \posfInv{\cutTopp{e}{g}} \right) - 
w\left( \cutTopp{g}{e}, \posfInv{\cutTopp{e}{g}} \right)
\end{align*}
which is the definition of slope $s(e,g)$.  When $k\leq i$, then the unions swap their correspondence to \cutT{p_i}{f_k}
and \cutTopp{p_i}{f_k} yielding $s_{i,k} = -s(e,g)$. 
\end{proof}

\begin{lemma} \label{lem:treeReducible}
A tree position $(C, \phi)$ is weakly reducible if for
some pair of nonadjacent edges $x$ and $y$ the following conditions hold: 
\begin{gather}
s(p_i,x)<0,   \\
b(p_{t-1}) \leq b(p_t) \ \text{ for all }\  1<t \leq i, \text{ and }\\
s(p_i,x) + s(p_i,y) +2a(x,y) < 0  \label{eqn:treduceB}
\end{gather}
where the edges of the simple path connecting $x$ to $y$ is denote by $p_t$ for $t$ from 1 to $i$.
Moreover, if this is the case then shifting $x$ to $y$ strictly
reduces the width of the tree position.  
\end{lemma}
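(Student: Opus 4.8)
The plan is to reduce this statement to the already-established linear reduction criterion, Lemma~\ref{lem:TILOreduction}, by passing through the branch shift quotient graph. First I would fix the pair $x,y$ and build the branch shift quotient graph $Q$ induced from $x$, $y$, and $(C,\position)$ as in Definition~\ref{def:bsqg}, equipping it with the identity-map TILO ordering $u_0,\dots,u_{i+2}$. By Lemma~\ref{lem:branchshiftTILO}, shifting $x$ to $y$ is realized on $Q$ by a single cyclic shift of the vertex in position $1$, so it is enough to check that this TILO shift meets the hypotheses of Lemma~\ref{lem:TILOreduction}. Granting that, Corollary~\ref{coro:wrquotient} promotes the resulting width decrease on $Q$ to a width decrease of the tree position, and since (as in the proof of that corollary) a branch shift changes only the widths $b(p_1),\dots,b(p_i)$ of the path edges, the strict lexicographic decrease persists for the full multiset of tree widths.

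The core of the argument is a translation dictionary between the tree quantities in the statement and the linear quantities $b_t$, $s_{i,k}$, $a_{i,j}$ of Section~\ref{sect:compare}. Lemma~\ref{lem:widthEquiv} gives $b(p_t)=b_Q(t)$, so the monotonicity hypothesis $b(p_{t-1})\le b(p_t)$ for $1<t\le i$ is literally the monotonicity hypothesis of Lemma~\ref{lem:TILOreduction}. For the slopes I would invoke Lemma~\ref{lem:slopetranslation}, together with its evident analogue for the end branches $x$ and $y$, once for $x$ and once for $y$: because $x$ and $y$ sit on opposite sides of the cut at $p_i$, the two cases of that lemma apply, giving $s(p_i,x)=-s_{i,1}$ (sign-flipped case, since the branch for $x$ lies on the near side) and $s(p_i,y)=s_{i,i+1}$ (unflipped case). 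Finally, by Definition~\ref{def:adjwSlope} the adjacency weight $a(x,y)$ is the weight of the quotient edge joining the vertices for $x$ and $y$, i.e.\ the adjacency $a_{1,i+1}$.

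Substituting the dictionary, the first hypothesis $s(p_i,x)<0$ becomes $s_{i,1}>0$, and the third hypothesis~(\ref{eqn:treduceB}), namely $s(p_i,x)+s(p_i,y)+2a(x,y)<0$, becomes $s_{i,1}-s_{i,i+1}-2a_{1,i+1}>0$. Together with the monotonicity condition these are exactly the three hypotheses of Lemma~\ref{lem:TILOreduction} for the shift in question, so that lemma certifies that the induced ordering on $Q$ is weakly reducible and that the shift strictly reduces its width. Pulling this back through Lemma~\ref{lem:branchshiftTILO} and Corollary~\ref{coro:wrquotient} then shows that shifting $x$ to $y$ strictly reduces the width of the tree position, as claimed.

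I expect the main obstacle to be bookkeeping rather than any new idea: one must carefully align the labels $p_0,p_1,\dots,p_{i+1}$ and the branch edges $x,f_2,\dots,f_i,y$ with the quotient positions $0,\dots,i+2$, apply the correct case of Lemma~\ref{lem:slopetranslation} to each of $x$ and $y$ (it is precisely their lying on opposite sides of the cut at $p_i$ that produces the relative sign between the two slope terms in~(\ref{eqn:treduceB})), and reconcile the destination index of the shift in Lemma~\ref{lem:branchshiftTILO} with the index $i$ used in the hypotheses so that every range and subscript in Lemma~\ref{lem:TILOreduction} matches exactly. The only genuinely substantive point left to verify is that the branch shift leaves every off-path width unchanged and lets no path width rise above the largest width that strictly falls, which is what guarantees that the reduction on $Q$ is a true lexicographic reduction of the whole tree position.
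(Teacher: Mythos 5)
Your proposal is correct and follows essentially the same route as the paper's own proof: build the branch shift quotient graph of Definition~\ref{def:bsqg} with the identity TILO ordering, translate widths, slopes, and adjacency via Lemma~\ref{lem:widthEquiv}, Lemma~\ref{lem:slopetranslation} (with $x$ playing the sign-flipped role $k=1$ and $y$ the unflipped role $k=i+1$), and Definition~\ref{def:adjwSlope}, then invoke Lemma~\ref{lem:TILOreduction} and Corollary~\ref{coro:wrquotient} to pull the strict width decrease back to the tree position. The bookkeeping concerns you flag are exactly the substance of the paper's substitution step, and your sign analysis matches it.
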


\begin{proof}
Let $Q$ be the branch shift quotient graph of $G$ induced by $x$, $y$,
and $(C,\position)$ with an identity map linear TILO ordering of $Q$'s
vertices $u_t$.  In terms of Definition \ref{def:bsqg}, $x$ is
associated with $u_1$ and $y$ is associated with $u_{i+1}$.  In terms
of Lemma~\ref{lem:TILOreduction}, $k=1$ as $u_1$ is being shifted to
$u_{i+1}$.  The resulting weakly reducible conditions are a
nonincreasing sequence of boundaries from $u_1$ to $u_i$, $s_{i,1}>0$,
and $s_{i,1} - s_{i,i+1} - 2a_{1,i+1}>0$.  By
Lemma~\ref{lem:slopetranslation}, we have $s_{i,1} = -s(p_i,x)$ and
$s_{i,i+1} = s(p_i,y)$.  By Definitions \ref{def:quotientgraph} and
\ref{def:adjwSlope}, we have $a_{1,i+1} = a(x,y)$.  By
Lemma~\ref{lem:widthEquiv}, we have $b_Q(t)$ = $b(p_t)$.  Therefore by
substituting, we find that the TILO ordering of the quotient graph $Q$
is weakly reducible if 
\begin{gather*}
-s(p_i,x) > 0,\\
b(p_{t-1}) \leq  b(p_t), \ \text{ and }\\
-s(p_i,x) - s(p_i,y) - 2a(x,y) > 0.
\end{gather*}
By Corollary~\ref{coro:wrquotient}, the tree position is weakly
reducible if this condition holds.  Multiplying the slope inequalities
by $-1$ gives us the condition stated above.  
\end{proof}

Condition (\ref{eqn:treduceB}) of the lemma gives the amount by which
the width of edge $p_i$ changes due to shifting $x$ to $y$,
\begin{equation} \label{eqn:widthChange}
 b^{'}(p_i) = b(p_i) +s(p_i,x) + s(p_i,y) +2a(x,y)
\end{equation}
where $b^{'}(p_i)$ is the width after the branch shift operation.

\section{Properties of thin tree positions useful for clustering and
  implementation}

The properties of a thin tree position of a graph $G=(V,E)$ can be
used to find \emph{pinch clusters} of $G$ (see~\cite[Definition
  1]{thingraphs}).  A subset $A \subset V$ is a pinch cluster if for
any sequence of vertices $v_1,\dots, v_m$, if adding $v_1,\dots, v_m$
to $A$ or removing $v_1,\dots, v_m$ from $A$ creates a new set with
smaller boundary, then for some $k<m$, adding or removing $v_1,\dots,
v_k$ to or from $A$ creates a set with strictly larger boundary.
Recall that, given a subset $A\subseteq V$, we say that the
\emph{boundary} of $A$ is $w(A, V\setminus A)$.

In linear TILO, pinch clusters are determined by local
minima in the width values defined by the TILO ordering. 
Any path in a thin tree position with local minimums of edge widths
determines pinch clusters of the quotient graph by cutting the graph at the
local minimum. These clusters can be refined to create pinch clusters
of the original graph by creating a linear ordering of vertices
compatible with the tree position and checking for any TILO
shifts. Limiting the search for refinement shifts and approximating
the pinch ratios by effective bounds calculated from a tree position
will be explored in future papers.  Some pinch clusters of $G$ can be determined
without futher refinement.  These occur at locations defined as follows.

\begin{definition}
Given a graph $G$ and a tree position $(C, \position)$ for $G$, we say that
an edge $e$ of $C$ is a \emph{local minimum} if each endpoint of $e$
is shared with an edge of $C$ that has strictly greater width than $e$
and whose second endpoint is not a boundary vertex.  
\end{definition}

When an edge is a local minimum of a tree position then it is a local minimum of every path 
through it in the tree.  Thus the edge determines pinch clusters in every quotient graph induced
by a path through the edge.  The next theorem shows that a local minimum edge finds a pinch cluster
of the original graph.

\begin{theorem}
If $e$ is a local minimum of a strongly irreducible tree position 
$(C,\position)$ of a graph $G$ then the two subsets of $G$ defined by the
subtrees that result from cutting $C$ at $e$ are pinch clusters.
\end{theorem}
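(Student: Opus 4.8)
The plan is to argue by contradiction with the strong irreducibility of $(C,\position)$. Fix a reference edge $g\neq e$ and write $A=\posfInv{\cutT{e}{g}}$ and $\bar A=\posfInv{\cutTopp{e}{g}}$ for the two vertex sets obtained by cutting $C$ at $e$, so that their common boundary $w(A,\bar A)$ equals $b(e)$. Since removing a sequence of vertices from one of these sets is the same operation as adding that sequence to the other, and since the two subtrees have equal boundary $b(e)$, it suffices to rule out the \emph{adding} case for a generic one of them, say $A$. Concretely, I assume for contradiction that $A$ is not a pinch cluster, which (for the adding case) gives a monotone chain $A=A_0\subset A_1\subset\cdots\subset A_m$ obtained by adjoining one vertex of $\bar A$ at a time, with $w(A_k,V\setminus A_k)\le b(e)$ for every $k<m$ and $w(A_m,V\setminus A_m)<b(e)$.

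The first step is to transport this chain onto the tree. Moving the single leaf $\posf{v_k}$ from the $\bar A$ side of $e$ to the $A$ side is a branch shift whose connecting path crosses $e$; because a branch shift preserves every edge lying on its path, the edge $e$ survives, and the cut at $e$ in the resulting tree realizes exactly $A_k$. Performing these single-leaf shifts in order therefore produces tree positions $(C^0,\position),\dots,(C^m,\position)$ with $C^0=C$ in which the width of $e$ equals $w(A_k,V\setminus A_k)$; in particular the width of $e$ never exceeds $b(e)$ along the chain and is strictly smaller at the end.

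The second step is to extract from this chain a single branch shift that contradicts strong irreducibility. Here I would use the slope identity of Lemma~\ref{lem:equivslopedef}, $s(e,g)=b(g)-2a(e,g)$, which records how moving a branch across $e$ changes $b(e)$, together with the passage to induced branch shift quotient graphs and Corollary~\ref{coro:wrquotient}. Since $e$ is a local minimum of the tree, it is a local minimum of the width sequence along every path through it, so the linear theory of~\cite{thingraphs} already shows that the cut at $e$ is a pinch cluster of each induced quotient graph; this disposes of all moves that transport whole branches across $e$.

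The \emph{main obstacle} is upgrading this statement about whole branches to one about individual vertices. A general monotone chain adds scattered vertices, and the added set $A_m\setminus A$ need not be the image of any subtree, so it is not transported by a single branch shift; moreover, because $e$ is a local minimum its two path-neighbors have strictly larger width, so no single shift meeting the hypotheses of Lemma~\ref{lem:treeReducible} can lower $b(e)$ directly, and the chain runs through intermediate trees $C^1,\dots,C^{m-1}$ over which strong irreducibility gives no control. Bridging this gap is the crux. I expect to handle it by choosing the witnessing chain extremally — of minimal length $m$, and among those of minimal final boundary — and then running an exchange argument based on the submodularity of the cut function $X\mapsto w(X,V\setminus X)$, trading a scattered addition for the movement of a single branch hanging off the path from some $x$ to $y$ while preserving both the strict decrease and the bound $\le b(e)$, so that Lemma~\ref{lem:treeReducible} finally applies. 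This is exactly where the local-minimum hypothesis is essential: it is what forbids any within-branch refinement from dipping below $b(e)$ before first climbing above it, and hence what distinguishes a local-minimum edge — whose cut is a pinch cluster of $G$ outright — from an arbitrary edge, whose cut would require further TILO refinement.
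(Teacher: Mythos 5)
There is a genuine gap, and it is exactly the one you flag yourself: your plan's decisive step is absent, and the route you sketch for it would not work as stated. Strong irreducibility of $(C,\position)$ forbids only a \emph{single} branch shift applied to $C$ itself from lowering the width; it says nothing about a sequence of shifts passing through intermediate trees $C^1,\dots,C^{m-1}$. Your transported chain of single-leaf shifts controls only the width of the edge $e$ (keeping it at most $b(e)$ and ending strictly below it), while each such shift may increase the widths of other edges along its path, so no individual step need lower the overall (lexicographically compared) width multiset, and the intermediate trees carry no irreducibility hypothesis at all. Hence the chain yields no contradiction. The proposed repair --- an extremal choice of chain plus a submodularity exchange converting scattered vertices into one movable branch --- is precisely the hard part, and it is only conjectured: the set $A_m\setminus A$ need not be the image of any subtree, so there is no candidate branch to shift, and nothing in the proposal produces one. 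As written this is a proof plan whose crux is missing, not a proof.

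The paper's proof avoids this obstacle entirely by delegating the scattered-vertex problem to the already-established linear theory. It takes a linear TILO ordering $o$ of all of $G$ of minimal width among orderings placing $P_1$ before $P_2$ (so that $b_k=b(e)$ at the partition boundary), and shows $o$ is strongly irreducible as a linear ordering: minimality handles shifts within each part, and shifts across the boundary are blocked by Lemma~\ref{lem:TILOreduction} once one knows $b_{k-1}>b_k$ and $b_{k+1}>b_k$. That inequality is the only place the tree's strong irreducibility enters: if $b_{k-1}<b_k$, let $x$ be the edge of $C$ at the leaf $\posf{v_k}$; then $s(e,x)=-s_{k-1,k}<0$ by Lemma~\ref{lem:slopetranslation}, the slopes satisfy $s(p_j,x)\le s(e,x)<0$ along the whole path from $x$ to $e$, and the single branch shift of $x$ to the edge adjacent to $e$ strictly lowers every path-edge width while changing no other width --- contradicting strong irreducibility (the case where $x$ is adjacent to $e$ follows directly from the local-minimum hypothesis). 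Having a local minimum of a strongly irreducible \emph{linear} ordering, the paper then invokes \cite[Theorem 4]{thingraphs}, which is the result that already handles arbitrary monotone chains of added or removed vertices. That citation is the ingredient your plan lacks: the contradiction must be manufactured from a single offending vertex in a suitably minimal ordering, not from the whole chain.
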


\begin{proof}
Assume $e$ is a local minimum of a tree position $(C, \position)$. 
We will prove that $e$ is a local minimum in a strongly irreducible 
TILO ordering on $G$. 

Let $\{P_1, P_2\}$ be the partition of the vertices of $G$ defined by the
local minimum $e$, i.e., $P_1 = \posfInv{\cutT{e}{g}}, P_2 =
\posfInv{\cutTopp{e}{g}}$ for some other edge $g$ in $C$.  Let $o$ be a TILO
ordering of $G$ that has minimal width among all orderings for which every
vertex of $P_1$ appears before every vertex of $P_2$. In other words, $P_1$
consists of the vertices $v_0,\ldots,v_k$, while $P_2$ consists of vertices
$v_{k+1},\ldots,v_{N-1}$ (where the subscript indicates the TILO ordering).
Note the the width $b_k$ between $v_{k}$ and $v_{k+1}$ is precisely
$b(e)$.

If $b_{k-1}>b_k$ and $b_{k+1}>b_k$ then by the second condition of Lemma~\ref{lem:TILOreduction}
it is not possible for a valid shift of a vertex between $P_1$ and $P_2$ of the ordering.  With
the initial assumption that ordering $o$ has minimal width within the partitions, this means that
$o$ is a strongly irreducible TILO ordering on $G$.
 
Let $x$ be the edge in $C$ that has $\posf{v_k}$ as an endpoint.
If $x$ and $e$ share an endpoint then $b(x)=b_{k-1}$.  Since $e$ is a local minimum, 
$b(x)>b(e)$ and thus $b_{k-1}>b_{k}$.
Assume that $x$ and $e$ do not share an endpoint and
denote the edges of the simple path from $x$ to $e$ as $p_1,\ldots,p_i$.  Let $y$ be the edge 
adjacent to $e$ and $p_i$.
Assume $b_{k-1} < b_k$.
Then $s_{k-1,k} = b_k - b_{k-1} > 0$ 
and by Lemma~\ref{lem:slopetranslation}, $s(e,x) = -s_{k-1,k} < 0$. 
For path edges $k > j$, $\cutTopp{p_k}{x} \subset \cutTopp{p_j}{x}$ and  
$a(p_k,x)\leq a(p_j,x)$. Using the slope definition in Lemma~\ref{lem:equivslopedef}, 
$s(p_j,x)=b(x)-2a(p_j,x)\leq b(x)-2a(p_k,x) = s(p_k,x)$. Since $e$ is at the end of the path,
$s(p_j,x) \leq s(e,x)$. 
This is also the same path from $x$ to $y$. 
Consider a branch shift of $x$ to $y$.
After shifting $x$ to $y$, $b^{'}(p_i)$ is $b_{k-1}$ as
cutting at $p_i$ induces the partitions of $G$ as $P_1 \setminus \set{v_k}$ and $P_2 \cup \set{v_k}$.
Shifting $x$ to $y$ strictly reduces the width $b(p_j)$
for every path edge $p_j$ since $s(p_j,x) < 0$.  No other widths are changed.
Thus shifting $x$ to $y$ reduces the width
of the tree position $(C,\position)$.  But this contradicts the original assumption that 
$(C,\position)$ is strongly irreducible.  Thus $b_{k-1}$ can not be less than $b_k$.




A symmetric argument implies that $b_{k+1}$ can not be less than
$b_k$.  In the case that $b_{k-1}=b_k$, the argument can be extended
to show that the closest nonequal boundary in each direction can not
be less than $b_k$.  So $b_k$ must be a local minimum of the TILO
ordering.  By~\cite[Theorem 4]{thingraphs}, this implies that $T_1$
and $T_2$ are pinch clusters.
\end{proof}

In addition to the conceptual advantages of thin tree position noted
in the introduction, this method is more amenable to efficient
implementation.  Width, slope, and adjacency weight are defined both
in terms of a weight of sets of edges and in terms of a weight between
sets of vertices. The edge based definition is better for accumulating
and propagating the tree position reduction calculations over sparse
matrices. The vertex based definition is better for accumulating and
propagating these calculations over dense matrices.

The following relationships between the width, slope, and adjacency
weight of edges sharing a common vertex are useful when accumulating
and propagation calculations across a tree position.  Given edges $e$,
$f$, and $g$ sharing a common interior vertex of a tree position, we have
\begin{align*}
b(e)  &= a(e,f)+a(e,g), \\
b(e) &= b(f) +b(g) -2a(f,g), \text{and}\\
  s(f,g) &= a(g,e)-a(f,g)=b(e)-b(f). \\
\end{align*}

The reduction checks of Lemma \ref{lem:treeReducible} can be done in
parallel. Since a branch shift of $x$ to $y$ does not modify the
subtrees off the path from $x$ to $y$, sets of branch shift
operations can be applied in parallel if the paths are independent
(do not intersect).



\bibliographystyle{spmpscinat}
\bibliography{thingraphs,prc,bd}   

\end{document}